\newtheorem{thm}{Theorem} [section]
\newtheorem{cor}[thm]{Corollary}
\newtheorem{lem}[thm]{Lemma}
\newtheorem{prop}[thm]{Proposition}
\theoremstyle{definition}
\theoremstyle{remark}
\numberwithin{equation}{section}
\begin{document} 
\title{Reversed Dickson polynomials of the fourth kind over finite fields}%
\author[Kaimin Cheng, Shaofang Hong and Xiaoer Qin]
{Kaimin Cheng$^{\rm a, b}$, Shaofang Hong$^{*, \rm a}$ and Xiaoer Qin$^{\rm c}$\\
$^{\rm a}$Mathematical College, Sichuan University, Chengdu 610064, P.R. China\\
$^{\rm b}$Department of Mathematics, Sichuan University Jinjiang College,
Pengshan 620860, P.R. China\\
$^{\rm c}$School of Mathematics and Statistics, Yangtze Normal
University, Chongqing 408100, P.R. China}
\thanks{$^*$Hong is the corresponding author and was supported
partially by National Science Foundation of China Grant \# 11371260.
Cheng was supported partially by the General Project of Department
of Education of Sichuan Province \# 15ZB0434. Qin was supported partially by
Science and Technology Research Projects of Chongqing Education Committee Grant
\# KJ15012004. \\
Emails: ckm20@126.com, cheng.km@stu.scu.edu.cn (K. Cheng);
sfhong@scu.edu.cn, s-f.hong@tom.com, hongsf02@yahoo.com (S. Hong); 
qincn328@sina.com (X. Qin).} 

\keywords{Permutation polynomial, Reversed Dickson polynomial
of the fourth kind, Finite field, Generating function}
\subjclass[2000]{Primary 11T06, 11T55, 11C08}
\date{\today}%
\begin{abstract}
In this paper, we obtain several results on the permutational
behavior of the reversed Dickson polynomial $D_{n,3}(1,x)$
of the fourth kind over the finite field ${\mathbb F}_{q}$.
Particularly, we present the explicit evaluation of the
first moment $\sum_{a\in {\mathbb F}_{q}}D_{n,3}(1,a)$.
\end{abstract}

\maketitle

\section{Introduction}
Let ${\mathbb F}_{q}$ be the finite field of characteristic $p$ with $q$ elements.
Associated to any integer $n\ge 0$ and a parameter $a\in {\mathbb F}_{q}$,
the $n$-th \textit{Dickson polynomials of the first kind and of the second kind},
denoted by $D_n(x,a)$ and $E_n(x,a)$, are defined for $n\ge 1$ by
$$D_n(x,a):=\sum_{i=0}^{[\frac{n}{2} ]}\frac{n}{n-i}\binom{n-i}{i}(-a)^ix^{n-2i}$$
and
$$E_n(x,a):=\sum_{i=0}^{[\frac{n}{2} ]}\binom{n-i}{i}(-a)^ix^{n-2i},$$
respectively, and $D_0(x,a):=2, E_0(x,a):=1$, where $[\frac{n}{2}]$
means the largest integer no more than $\frac{n}{2}$. In 2012, Wang and Yucas
\cite{[WY]} further defined the \textit{$n$-th Dickson polynomial
of the $(k+1)$-th kind $D_{n,k}(x,a)\in {\mathbb F}_{q}[x]$} for $n\ge 1$ by
$$D_{n,k}(x,a):=\sum_{i=0}^{[\frac{n}{2} ]}\frac{n-ki}{n-i}\binom{n-i}{i}(-a)^ix^{n-2i}$$
and $D_{0,k}(x,a):=2-k$.

Hou, Mullen, Sellers and Yucas \cite{[HMSY]} introduced the definition of
\textit{the reversed Dickson polynomial of the first kind},
denoted by $D_n(a,x)$, as follows

$$D_n(a,x):=\sum_{i=0}^{[\frac{n}{2} ]}\frac{n}{n-i}\binom{n-i}{i}(-x)^ia^{n-2i}$$
if $n\ge 1$ and $D_0(a, x)=2$.
To extend the definition of reversed Dickson polynomials, Wang and Yucas \cite{[WY]}
defined \textit{the $n$-th reversed Dickson polynomial of
$(k+1)$-th kind} $D_{n,k}(a,x)\in {\mathbb F}_{q}[x]$, which is defined for $n\ge 1$ by
$$D_{n,k}(a,x):=\sum_{i=0}^{[\frac{n}{2} ]}\frac{n-ki}{n-i}\binom{n-i}{i}(-x)^ia^{n-2i}$$
and $D_{0,k}(a,x)=2-k$.

It is well known that $D_n(x,0)$ is a permutation polynomial of ${\mathbb F}_{q}$
if and only if $\gcd(n,q-1)=1$, and if $a\ne 0$, then $D_n(x,a)$ induces a permutation
of ${\mathbb F}_{q}$ if and only if $\gcd(n,q^2-1)=1$. Besides, there are lots of
published results on permutational properties of Dickson polynomial $E_n(x,a)$
of the second kind (see, for example, \cite{[Coh]}). In \cite{[WY]},
Wang and Yucas investigated the permutational properties of Dickson polynomial
$D_{n,2}(x,1)$ of the third kind. They got some necessary
conditions for $D_{n,2}(x,1)$ to be a permutation polynomial of ${\mathbb F}_{q}$.

Hou, Mullen, Sellers and Yucas \cite{[HMSY]} considered the permutational
behavior of reversed Dickson polynomial $D_{n}(a,x)$ of the first kind.
Actually, they showed that $D_n(a,x)$ is closely related to almost perfect
nonlinear functions, and obtained some families of permutation polynomials
from the revered Dickson polynomials of the first kind. In \cite{[HL]},
Hou and Ly found several necessary conditions for the revered Dickson
Polynomials $D_n(1,x)$ of the first kind to be a permutation polynomial.
Recently, Hong, Qin and Zhao \cite{[HQZ]} studied the revered Dickson
polynomial $E_n(a,x)$ of the second kind that is defined for $n\ge 1$ by
$$E_{n}(a,x):=\sum_{i=0}^{[\frac{n}{2} ]}\binom{n-i}{i}(-x)^ia^{n-2i}$$
and $E_{0}(a,x)=1$. In fact, they gave some necessary conditions for
the revered Dickson polynomial $E_n(1,x)$ of the second kind to be
a permutation polynomial of ${\mathbb F}_{q}$. Regarding the revered Dickson
polynomial $D_{n, 2}(a,x)\in {\mathbb F}_{q}[x]$ of the third kind,
from its definition one can derive that
\begin{equation}\label{c1}
D_{n,2}(a,x)=E_{n-1}(a,x)
\end{equation}
for each $x\in {\mathbb F}_{q}$. Using (\ref{c1}), we can deduce
immediately from \cite{[HQZ]} the similar results on the permutational
behavior of the reversed Dickson polynomial $D_{n,2}(a,x)$ of the third kind.
Actually, for the results in \cite{[HQZ]}, we need just to replace $E_n(1, x)$
by $D_{n, 2}(1, x)$ and replace all other $n$ by $n-1$, then we can
obtain the corresponding results on the revered Dickson
polynomial $D_{n, 2}(a,x)\in {\mathbb F}_{q}[x]$ of the third kind.
We here do not list these results.

In this paper, our main goal is to investigate the revered
Dickson polynomial $D_{n,3}(a,x)$ of the fourth kind which
is defined by
\begin{equation}\label{c2}
D_{n,3}(a,x):=\sum_{i=0}^{[\frac{n}{2} ]}
\frac{n-3i}{n-i}\binom{n-i}{i}(-x)^ia^{n-2i}
\end{equation}
if $n\ge 1$ and $D_{0,3}(a,x):=-1$. For $a\ne 0$, we write $x=y(a-y)$
with an indeterminate $y\ne \frac{a}{2}$. Then $D_{n,3}(a,x)$ can be 
rewritten as

\begin{equation}\label{c3}
D_{n,3}(a,x)=\frac{(2a-y)y^n-(y+a)(a-y)^n}{2y-a}.
\end{equation}
We have
$$D_{n, 3}\Big(a, \frac{a^2}{4}\Big)=\frac{(3n-1)a^n}{2^n}.\eqno(1.4)$$
In fact, (1.3) and (1.4) follows from Theorem 2.2 (i)
and Theorem 2.4 (i) below. It is easy to see that $D_{n,3}(a,x)=E_n(a,x)$
if ${\rm char}({\mathbb F}_{q})=2$, and $D_{n,3}(a,x)=D_{n}(a,x)$
if ${\rm char}({\mathbb F}_{q})=3$. Thus we always
assume $p={\rm char}({\mathbb F}_{q})>3$ in what follows.

The paper is organized as follows. First in section 2, we study the
properties of the reversed Dickson polynomial $D_{n,3}(a,x)$ of the
fourth kind. Subsequently, in Section 3, we prove a necessary condition
for the reversed Dickson polynomial $D_{n,3}(1,x)$ of the fourth kind
to be a permutation polynomial of ${\mathbb F}_{q}$ and then
introduce an auxiliary polynomial to present a characterization
for $D_{n,3}(1,x)$ to be a permutation of ${\mathbb F}_q$. 
From the Hermite criterion \cite{[LN]} one knows that a function
$f: {\mathbb F}_{q}\rightarrow {\mathbb F_{q}}$
is a permutation polynomial of ${\mathbb F_{q}}$
if and only if the $i$-th moment
\begin{align*}\nonumber
\sum_{a\in {\mathbb F_{q}}}f(a)^i=
{\left\{\begin{array}{rl}
0,& {\rm if} \ 0\le i\le q-2,\\
-1,& {\rm if} \ i=q-1.
\end{array}
\right.}
\end{align*}
Thus to understand well the permutational behavior of
the reversed Dickson polynomial $D_{n,3}(1,x)$
of the fourth kind, we would like to know if the $i$-th
moment $\sum_{a\in {\mathbb F}_q}D_{n,3}(1,a)^i$
is computable. We are able to treat with this sum when $i =1$.
The final section is devoted to the computation of the first moment
$\sum_{a\in {\mathbb F_{q}}}D_{n,3}(1,a)$.

\section{Revered Dickson polynomials of the fourth kind}

In this section, we study the properties of the revered Dickson polynomials
$D_{n,3}(a,x)$ of the fourth kind. Clearly, if $a=0$, then
\begin{align*}\nonumber
D_{n,3}(0,x)={\left\{
  \begin{array}{rl}
0,& {\rm if} \ n \ {\rm is \ odd},\\
(-1)^{\frac{n}{2}+1}x^{\frac{n}{2}},& {\rm if} \ n \ {\rm is \ even}.
\end{array}
\right.}
\end{align*}
Therefore, $D_{n,3}(0,x)$ is a PP (permutation polynomial) of ${\mathbb F_{q}}$
if and only if $n$ is an even integer with $\gcd(\frac{n}{2},q-1)=1$.
In what follows, we always let $a\in {\mathbb F_{q}^*}$.
First, we give a basic fact as follows.
\begin{lem}
\cite{[LN]} Let $f(x)\in {\mathbb F_{q}}[x]$. Then $f(x)$
is a PP of ${\mathbb F_{q}}$ if and only if
$cf(dx)$ is a PP of ${\mathbb F_{q}}$ for any
given $c, d\in {\mathbb F_{q}^*}$.
\end{lem}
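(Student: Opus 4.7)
The plan is to view the polynomial $cf(dx)$ as the composition of three self-maps of $\mathbb{F}_q$, namely multiplication by $d$, application of $f$, and multiplication by $c$. Writing $\mu_t\colon \mathbb{F}_q\to\mathbb{F}_q$ for the multiplication-by-$t$ map, this reads
$$cf(dx)=(\mu_c\circ f\circ \mu_d)(x).$$
Since $c,d\in\mathbb{F}_q^*$, the maps $\mu_c$ and $\mu_d$ are bijections of $\mathbb{F}_q$, with inverses $\mu_{c^{-1}}$ and $\mu_{d^{-1}}$ respectively.

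For the forward implication, I would observe that if $f$ is a PP of $\mathbb{F}_q$ then $\mu_c\circ f\circ \mu_d$ is a composition of three bijections and is therefore a bijection, so $cf(dx)$ is a PP. Conversely, if $cf(dx)$ is a PP, then $f=\mu_{c^{-1}}\circ(cf(dx))\circ \mu_{d^{-1}}$ is again a composition of bijections, and so $f$ itself is a bijection of $\mathbb{F}_q$. Each direction is thus a single application of the fact that the composition of bijections is a bijection.

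There is no substantive obstacle here: the lemma just records the standard observation that pre- and post-composing a polynomial map with a nonzero scalar multiplication preserves the permutation property. Its anticipated role in the paper is purely normalizational, allowing the authors to replace $D_{n,3}(1,x)$ by convenient scalar variants $cD_{n,3}(1,dx)$ in later reductions without loss of generality. Because the result is elementary, I would dispatch it in two or three lines and simply cite \cite{[LN]} for the statement.
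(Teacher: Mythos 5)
Your proof is correct: the composition-of-bijections argument is the standard (and essentially only) way to prove this, and the paper itself offers no proof at all, simply citing \cite{[LN]} for the statement. Nothing further is needed.
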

Then we can deduce the following result.
\begin{thm}
Let $a, b\in {\mathbb F_{q}^*}$. Then the following are true.

{\rm (i)}. One has $D_{n,3}(a,x)=\frac{a^n}{b^n}D_{n,3}(b,\frac{b^2}{a^2}x)$.

{\rm (ii)}. We have that $D_{n,3}(a,x)$ is a PP of ${\mathbb F_{q}}$
if and only if $D_{n,3}(1,x)$ is a PP of ${\mathbb F_{q}}$.
\end{thm}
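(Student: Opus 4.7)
The plan is to prove (i) by direct expansion of the definition (\ref{c2}) and then derive (ii) as an immediate consequence via Lemma 2.1.

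For part (i), I would start from the right-hand side and substitute $b$ in place of $a$ and $\frac{b^2}{a^2}x$ in place of $x$ into the defining sum
$$D_{n,3}(b,\tfrac{b^2}{a^2}x)=\sum_{i=0}^{[n/2]}\frac{n-3i}{n-i}\binom{n-i}{i}\Bigl(-\tfrac{b^2}{a^2}x\Bigr)^i b^{n-2i}.$$
The key bookkeeping is that $\bigl(-\tfrac{b^2}{a^2}x\bigr)^i b^{n-2i}=(-x)^i\,b^{2i}a^{-2i}\,b^{n-2i}=(-x)^i\,b^n\,a^{-2i}$. Pulling the factor $b^n/a^n$ out of every term (which is possible since $a,b\in\mathbb F_q^{*}$) leaves $a^{n-2i}$ inside the sum, giving exactly $\tfrac{b^n}{a^n}D_{n,3}(a,x)$. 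Multiplying both sides by $\tfrac{a^n}{b^n}$ yields the claimed identity.

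For part (ii), I would specialize (i) to $b=1$, obtaining
$$D_{n,3}(a,x)=a^n\,D_{n,3}\!\bigl(1,\tfrac{1}{a^2}x\bigr).$$
Setting $f(x):=D_{n,3}(1,x)$, $c:=a^n$ and $d:=a^{-2}$, this reads $D_{n,3}(a,x)=c\,f(dx)$ with $c,d\in\mathbb F_q^{*}$ (using the standing assumption $a\ne 0$). Lemma 2.1 then gives at once that $D_{n,3}(a,x)$ is a PP of $\mathbb F_q$ if and only if $f(x)=D_{n,3}(1,x)$ is.

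There is no real obstacle: the whole argument is a substitution into the defining series plus one invocation of Lemma 2.1. The only point requiring any care is to check that the index range $0\le i\le[n/2]$ matches on both sides of (i) (it does, since the substitution does not change the indexing) and that the factorization $b^{n-2i}\cdot b^{2i}=b^n$ is handled correctly so that exactly the factor $b^n/a^n$ can be pulled outside the sum.
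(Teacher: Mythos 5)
Your proposal is correct and follows essentially the same route as the paper: part (i) by direct substitution into the defining sum (\ref{c2}) and collection of the powers of $a$ and $b$, and part (ii) by specializing to $b=1$ and invoking Lemma 2.1 with $c=a^n$, $d=a^{-2}$. The only cosmetic difference is that the paper starts from $\frac{a^n}{b^n}D_{n,3}(b,\frac{b^2}{a^2}x)$ and simplifies it to $D_{n,3}(a,x)$, whereas you expand $D_{n,3}(b,\frac{b^2}{a^2}x)$ and factor out $\frac{b^n}{a^n}$; these are the same computation.
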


\begin{proof}
{\rm (i)}. By the definition of $D_{n,3}(a,x)$, we have

\begin{align*}
&\frac{a^n}{b^n}D_{n,3}\Big(b,\frac{b^2}{a^2}x\Big)\\
=&\frac{a^n}{b^n}
\sum_{i=0}^{[\frac{n}{2} ]}\frac{n-3i}{n-i}\binom{n-i}{i}
(-1)^ib^{n-2i}\frac{b^{2i}}{a^{2i}}x^{i}\\
=&\sum_{i=0}^{[\frac{n}{2}]}\frac{n-3i}{n-i}\binom{n-i}{i}(-1)^ia^{n-2i}x^{i}\\
=&D_{n,3}(a,x)
\end{align*}
as required. Part (i) is proved.

(ii). Taking $b=1$ in part (i), we have
$$D_{n,3}(a,x)=a^nD_{n,3}\Big(1,\frac{x}{a^2}\Big).$$
It then follows from Lemma 2.1 that $D_{n,3}(a,x)$ is a PP
of ${\mathbb F_{q}}$ if and only if
$D_{n,3}(1,x)$ is a PP of ${\mathbb F_{q}}$.
This completes the proof of part (ii). So Theorem 2.2 is proved.
\end{proof}

Theorem 2.2 tells us that to study the permutational
behavior of $D_{n,3}(a,x)$ over ${\mathbb F_{q}}$,
one only needs to consider that of $D_{n,3}(1,x)$.
In the following, we supply several basic properties
on the revered Dickson polynomial $D_{n,3}(1,x)$
of the fourth kind. The following result is given
in \cite{[HQZ]} and \cite{[HMSY]}
without proof. For the completeness,
we here present a proof.

\begin{lem} \cite{[HQZ]} \cite{[HMSY]}
Let $n\ge 0$ be an integer. Then
we have $D_n(1,x(1-x))=x^n+(1-x)^n$ and
$E_{n}(1,x(1-x))=\frac{x^{n+1}-(1-x)^{n+1}}{2x-1}$.
\end{lem}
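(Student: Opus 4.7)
The plan is to prove both identities by induction on $n$, exploiting the fact that $D_n(x,a)$ and $E_n(x,a)$ satisfy the same two-term recurrence
\begin{equation*}
P_n(x,a) = x\,P_{n-1}(x,a) - a\,P_{n-2}(x,a) \qquad (n\ge 2),
\end{equation*}
only the initial data differing: $D_0=2,\ D_1=x$ versus $E_0=1,\ E_1=x$. Specializing $x\mapsto 1$ and $a\mapsto x(1-x)$ turns this into the recurrence $f_n = f_{n-1} - x(1-x)\,f_{n-2}$, which is exactly what both closed-form right-hand sides should satisfy.

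First I would check the two base cases. For the first identity, $D_0(1,x(1-x)) = 2 = x^0+(1-x)^0$ and $D_1(1,x(1-x)) = 1 = x+(1-x)$. For the second identity, $E_0(1,x(1-x)) = 1$ and $\tfrac{x-(1-x)}{2x-1}=1$, while $E_1(1,x(1-x)) = 1$ and $\tfrac{x^2-(1-x)^2}{2x-1}=\tfrac{2x-1}{2x-1}=1$.

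For the inductive step on the first identity, assuming the formula holds for exponents below $n$, I would compute
\begin{align*}
\bigl(x^{n-1}+(1-x)^{n-1}\bigr) &- x(1-x)\bigl(x^{n-2}+(1-x)^{n-2}\bigr) \\
&= x^{n-1}\bigl(1-(1-x)\bigr) + (1-x)^{n-1}\bigl(1-x\bigr) \\
&= x^{n}+(1-x)^{n},
\end{align*}
which matches $D_n(1,x(1-x))$ via the recurrence. A parallel manipulation, keeping the common denominator $2x-1$, gives
\begin{equation*}
\frac{x^{n}-(1-x)^{n}}{2x-1} - x(1-x)\cdot\frac{x^{n-1}-(1-x)^{n-1}}{2x-1} = \frac{x^{n+1}-(1-x)^{n+1}}{2x-1},
\end{equation*}
establishing the second identity.

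There is no real obstacle: both formulas are instances of the classical factorization $D_n(y+z,yz)=y^n+z^n$ and $E_n(y+z,yz)=(y^{n+1}-z^{n+1})/(y-z)$, applied with $(y,z)=(x,1-x)$, and the induction merely packages this algebraic fact. The only point that requires a touch of care is the sign bookkeeping in the telescoping $1-(1-x)=x$ and $1-x=-(x-1)$, which ensures the cross terms line up correctly.
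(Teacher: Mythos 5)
Your proof is correct, but it follows a genuinely different route from the paper's. You run a single induction on $n$ using the classical three-term recurrence $P_n(x,a)=xP_{n-1}(x,a)-aP_{n-2}(x,a)$ shared by $D_n$ and $E_n$, which after the substitution $x\mapsto 1$, $a\mapsto x(1-x)$ becomes $f_n=f_{n-1}-x(1-x)f_{n-2}$; the base cases and the telescoping computations $1-(1-x)=x$ and $1-x$ are all verified correctly, and the same inductive step handles both closed forms at once. The paper instead proves the first identity in one stroke by rewriting $1=x+(1-x)$ inside the defining sum and invoking Waring's formula (Theorem 1.76 of \cite{[LN]}), and proves the second by deriving the step-two relation $E_n(1,x(1-x))=D_n(1,x(1-x))+xE_{n-2}(1,x(1-x))$ from the binomial coefficients, iterating it into a telescoped sum, and then recognizing $\sum_{i=0}^n x^{n-i}(1-x)^i$ as the geometric expression $\frac{x^{n+1}-(1-x)^{n+1}}{2x-1}$. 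Your argument is more uniform and elementary, but it imports the two-term recurrence for Dickson polynomials of both kinds as an unproved ingredient; that recurrence is standard and available in \cite{[LN]}, so this is a matter of citation rather than a gap, though in a fully self-contained write-up you would want to verify it from the explicit binomial definitions (a short computation). The paper's approach buys independence from that recurrence at the cost of a heavier reliance on Waring's formula and a somewhat longer telescoping manipulation for the second identity.
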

\begin{proof}
Since $D_0(1, x(1-x))=2$, the first formula is true for the
case $n=0$. Let now $n\ge 1$ be an integer. Then
$$D_n(1,x(1-x))=\sum_{i=0}^{[\frac{n}{2}]}
\frac{n}{n-i}\binom{n-i}{i}(x+1-x)^{n-2i}(-x(1-x))^{i}.$$
It then follows from Waring's formula (see, for instance,
Theorem 1.76 of \cite{[LN]}) that for any integer $n\ge 1$,
we have
\begin{align}
D_n(1,x(1-x))=x^n+(1-x)^n\label{ch0}.
\end{align}
as desired. The first formula is proved.

Since $E_0(1, x(1-x))=E_1(1, x(1-x))=1$, the second formula holds
when $n=0$ and 1. Now let $n\ge 2$ be an integer. Then we have

\begin{align*}
&E_n(1,x(1-x))\\
=&\sum_{i=0}^{[\frac{n}{2}]}
\frac{n-i}{n-i}\binom{n-i}{i}(-x(1-x))^{i}\nonumber\\
=&\sum_{i=0}^{[\frac{n}{2}]}
\frac{n}{n-i}\binom{n-i}{i}(-x(1-x))^{i}+x(1-x)\sum_{i=1}^{[\frac{n}{2}]}
\frac{i}{n-i}\binom{n-i}{i}(-x(1-x))^{i-1}\nonumber\\
=&\sum_{i=0}^{[\frac{n}{2}]}
\frac{n}{n-i}\binom{n-i}{i}(-x(1-x))^{i}+x\sum_{i=1}^{[\frac{n}{2}]}
\binom{n-1-i}{i-1}(-x(1-x))^{i-1}\nonumber\\
=&\sum_{i=0}^{[\frac{n}{2}]}
\frac{n}{n-i}\binom{n-i}{i}(-x(1-x))^{i}+x\sum_{i=0}^{[\frac{n-2}{2}]}
\binom{n-2-i}{i}(-x(1-x))^{i}\nonumber\\
=&D_n(1,x(1-x))+xE_{n-2}(1,x(1-x)).
\end{align*}
It follows that
\begin{align}
&E_n(1, x(1-x))\nonumber \\
=&\sum_{i=0}^{[\frac{n}{2}]-1}x^i(1-x)^iD_{n-2i}(1, x(1-x))
+x^{[\frac{n}{2}]}(1-x)^{[\frac{n}{2}]}E_{n-2[\frac{n}{2}]}(1, x(1-x)).  \label{ch00}
\end{align}
From (\ref{ch0}) and (\ref{ch00}) one can deduce
that if $n\ge 1$ is odd, then we have
\begin{align*}
&E_n(1, x(1-x))\\
=&\sum_{i=0}^{\frac{n-3}{2}}x^i(1-x)^i D_{n-2i}(1, x(1-x))+
y^{\frac{n-1}{2}}(1-x)^{\frac{n-1}{2}}E_{1}(1, x(1-x))\nonumber\\
=&\sum_{i=0}^{\frac{n-3}{2}}x^i(1-x)^i \big(x^{n-2i}+(1-x)^{n-2i}\big)+
x^{\frac{n-1}{2}}(1-x)^{\frac{n-1}{2}}(x+1-x)\nonumber\\
=&\sum_{i=0}^{\frac{n-1}{2}}\big(x^{n-i}(1-x)^i+x^i(1-x)^{n-i}\big)\nonumber\\
=&\sum_{i=0}^{n}x^{n-i}(1-x)^i\nonumber\\
=&\frac{x^{n+1}-(1-x)^{n+1}}{2x-1}, \nonumber\\
\end{align*}
and if $n\ge 0$ is even, then one has

\begin{align*}
&E_n(1, x(1-x))\\
=&\sum_{i=0}^{\frac{n}{2}-1}x^i(1-x)^i D_{n-2i}(1, x(1-x))+
x^{\frac{n}{2}}(1-x)^{\frac{n}{2}}E_0(1, x(1-x))\nonumber\\
=&\sum_{i=0}^{\frac{n}{2}-1}\big(x^{n-i}(1-x)^i+x^i(1-x)^{n-i}\big)
+x^{\frac{n}{2}}(1-x)^{\frac{n}{2}}\nonumber\\
=&\sum_{i=0}^{n}x^{n-i}(1-x)^i\nonumber\\
=&\frac{x^{n+1}-(1-x)^{n+1}}{2x-1} \nonumber\\
\end{align*}
as expected. So the second formula is proved.

This concludes the proof of Lemma 2.3.
\end{proof}

\begin{thm}
Each of the following is true.

{\rm (i)}. For any integer $n\ge 0$,
we have $D_{n,3}(1,\frac{1}{4})=\frac{3n-1}{2^n}$
and
$D_{n,3}(1,x(1-x))=\frac{(2-x)x^n-(x+1)(1-x)^n}{2x-1}$
if $x\ne \frac{1}{2}$.

{\rm (ii)}. If $n_1$ and $n_2$ are positive integers such that
$n_1\equiv n_2\pmod{q^2-1}$, then one has
$D_{n_1,3}(1,x_0)=D_{n_2,3}(1,x_0)$ for any
$x_0\in {\mathbb F_{q}\setminus\{\frac{1}{4}\}}$.
\end{thm}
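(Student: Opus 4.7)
The plan is to reduce $D_{n,3}(1,x)$ to a linear combination of the reversed Dickson polynomials $D_n(1,x)$ and $E_n(1,x)$, for which Lemma 2.3 supplies closed forms, and then read off both parts. The key algebraic observation is the splitting
\[
\frac{n-3i}{n-i} = 3 - \frac{2n}{n-i},
\]
which, substituted into (1.2), yields $D_{n,3}(a,x) = 3E_n(a,x) - 2D_n(a,x)$ for every $n \ge 0$.

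To prove part (i) away from $x = \frac{1}{2}$, I would set $a = 1$ and replace the variable by $x(1-x)$ in this identity. Lemma 2.3 then gives
\[
D_{n,3}(1,x(1-x)) = \frac{3\bigl(x^{n+1}-(1-x)^{n+1}\bigr)}{2x-1} - 2\bigl(x^n+(1-x)^n\bigr),
\]
and placing the two terms over the common denominator $2x-1$ and collecting powers of $x$ and $1-x$ produces exactly the stated numerator $(2-x)x^n - (x+1)(1-x)^n$. For the value at $\frac{1}{4}$, the naive substitution $x = \frac{1}{2}$ in this rational expression is of the indeterminate form $\frac{0}{0}$, so instead I would use the polynomial expression $E_n(1,x(1-x)) = \sum_{i=0}^{n} x^{n-i}(1-x)^{i}$ already derived inside the proof of Lemma 2.3. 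Evaluated at $x = \frac{1}{2}$ it gives $E_n(1,\frac{1}{4}) = (n+1)/2^n$, and since $D_n(1,\frac{1}{4}) = 2/2^n$, the identity $D_{n,3} = 3E_n - 2D_n$ delivers $D_{n,3}(1,\frac{1}{4}) = (3(n+1)-4)/2^n = (3n-1)/2^n$.

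For part (ii), fix $x_0 \in \mathbb{F}_q \setminus \{\frac{1}{4}\}$ and let $y_0$ be a root of $t^2 - t + x_0 = 0$; its discriminant $1 - 4x_0$ lies in $\mathbb{F}_q$, so $y_0 \in \mathbb{F}_{q^2}$, and the other root is $1 - y_0$, distinct from $y_0$ since $x_0 \ne \frac{1}{4}$. If $x_0 \ne 0$ then $y_0, 1-y_0 \in \mathbb{F}_{q^2}^{*}$, and by part (i),
\[
D_{n,3}(1, x_0) = \frac{(2-y_0)y_0^{n} - (y_0+1)(1-y_0)^{n}}{2y_0 - 1}.
\]
Since $n_1 \equiv n_2 \pmod{q^2-1}$ implies $y_0^{n_1} = y_0^{n_2}$ and $(1-y_0)^{n_1} = (1-y_0)^{n_2}$ by Fermat's little theorem applied in $\mathbb{F}_{q^2}$, the right-hand side is unchanged under $n_1 \leftrightarrow n_2$. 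The residual case $x_0 = 0$ is immediate: the defining sum (1.2) collapses to the $i=0$ term, giving $D_{n,3}(1,0) = 1$ for all $n \ge 1$, which is trivially independent of $n$.

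The main obstacle I anticipate is the correct treatment of the indeterminacy at $x = \frac{1}{2}$ for part (i); the cleanest path is to avoid the rational expression there and evaluate the polynomial form of $E_n$ directly, as sketched above. Everything else is routine polynomial bookkeeping together with a single application of Fermat's little theorem over $\mathbb{F}_{q^2}$.
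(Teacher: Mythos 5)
Your proposal is correct, and for the rational-function formula in (i) and for part (ii) it follows the paper's own route exactly: the splitting $\frac{n-3i}{n-i}=3-\frac{2n}{n-i}$ giving $D_{n,3}=3E_n-2D_n$ is precisely the paper's identity (2.4), and the part (ii) argument via $y_0\in\mathbb{F}_{q^2}$ with $x_0=y_0(1-y_0)$ and $y_0^{n_1}=y_0^{n_2}$ is the same (your separate handling of $x_0=0$ is harmless but unnecessary, since $0^{n_1}=0^{n_2}$ for positive exponents). The one genuine deviation is the evaluation at $x=\frac14$: the paper instead splits $\frac{n-3i}{n-i}=\frac{n-2i}{n-i}-\frac{i}{n-i}$ to get $D_{n,3}(1,\tfrac14)=D_{n,2}(1,\tfrac14)+\tfrac14 E_{n-2}(1,\tfrac14)=E_{n-1}(1,\tfrac14)+\tfrac14E_{n-2}(1,\tfrac14)$ and then quotes the value $E_m(1,\tfrac14)=\frac{m+1}{2^m}$ from the cited reference, whereas you reuse the decomposition $3E_n-2D_n$ and evaluate the polynomial identities $D_n(1,x(1-x))=x^n+(1-x)^n$ and $E_n(1,x(1-x))=\sum_{i=0}^{n}x^{n-i}(1-x)^i$ at $x=\frac12$. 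Your version is more uniform and avoids the external citation; the only thing to be careful about, which you correctly flagged, is that you must invoke the summation form of $E_n$ from inside the proof of Lemma 2.3 (a genuine polynomial identity) rather than the quotient $\frac{x^{n+1}-(1-x)^{n+1}}{2x-1}$ in the lemma's statement, which is indeterminate at $x=\frac12$. Both routes are sound; yours trades a reference for a direct evaluation.
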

\begin{proof}
(i). First of all, it is easy to see that
$D_{0,3}\big(1,\frac{1}{4}\big)=-1=\frac{3\times 0-1}{2^0}$
and
$D_{1,3}\big(1,\frac{1}{4}\big)=1=\frac{3\times 1-1}{2^1}$.
the first identity is true for the cases that $n=0$ and 1.
Now let $n\ge 2$. Then one has
\begin{align*}
D_{n,3}\Big(1,\frac{1}{4}\Big)&=\sum_{i=0}^{[\frac{n}{2}]}
\frac{n-3i}{n-i}\binom{n-i}{i}\Big(-\frac{1}{4}\Big)^{i}\\
&=\sum_{i=0}^{[\frac{n}{2} ]}\frac{n-2i}{n-i}\binom{n-i}{i}
\Big(-\frac{1}{4}\Big)^{i}+\sum_{i=0}^{[\frac{n}{2} ]}
\frac{-i}{n-i}\binom{n-i}{i}\Big(-\frac{1}{4}\Big)^{i}\\
&=D_{n,2}\Big(1,\frac{1}{4}\Big)+\frac{1}{4}\sum_{i=0}^{[\frac{n}{2} ]-1}
\binom{n-2-i}{i}\Big(-\frac{1}{4}\Big)^{i}\\
&=D_{n,2}\Big(1,\frac{1}{4}\Big)+\frac{1}{4}E_{n-2}\Big(1,\frac{1}{4}\Big).
\end{align*}
But (\ref{c1}) gives us that
$D_{n,2}(1,\frac{1}{4})=E_{n-1}(1, \frac{1}{4})$.
Hence Theorem 2.2 of \cite{[HQZ]} implies that
\begin{align*}
D_{n,3}\Big(1,\frac{1}{4}\Big)=&E_{n-1}\Big(1,\frac{1}{4}\Big)
+\frac{1}{4}E_{n-2}\Big(1,\frac{1}{4}\Big)\\
=&\frac{n}{2^{n-1}}+\frac{1}{4}\cdot\frac{n-1}{2^{n-2}}\\
=&\frac{3n-1}{2^n}
\end{align*}
as desired. So the first identity is proved.

Now we turn our attention to the second identity.
Let $x\ne \frac{1}{2}$. Then by the definition of
the $n$-th reversed Dickson polynomial
of the fourth kind, one has
\begin{align}
D_{n,3}(1,x(1-x))&=\sum_{i=0}^{[\frac{n}{2}]}
\frac{n-3i}{n-i}\binom{n-i}{i}(-x(1-x))^{i}\nonumber\\
&=\sum_{i=0}^{[\frac{n}{2}]}
\frac{3(n-i)-2n}{n-i}\binom{n-i}{i}(-x(1-x))^{i}\nonumber\\
&=3\sum_{i=0}^{[\frac{n}{2}]}
\binom{n-i}{i}(-x(1-x))^{i}-2\sum_{i=0}^{[\frac{n}{2}]}
\frac{n}{n-i}\binom{n-i}{i}(-x(1-x))^{i}\nonumber\\
&=3E_n(1,x(1-x))-2D_n(1,x(1-x))\label{ch1}.
\end{align}
But Lemma 2.3 gives us that
\begin{align}
D_n(1,x(1-x))=x^n+(1-x)^n\label{ch3}
\end{align}
and
\begin{align}
E_n(1,x(1-x))=\sum_{i=0}^{n}x^{n-i}(1-x)^i
=\frac{x^{n+1}-(1-x)^{n+1}}{2x-1}   \label{ch4}.
\end{align}
Thus it follows from (\ref{ch1}) to (\ref{ch4}) that
\begin{align*}
D_{n,3}(1, x)=&D_{n,3}(1,y(1-y))\\
=&3E_n(1,y(1-y))-2D_n(1,y(1-y))\\
=&\frac{3y^{n+1}-3(1-y)^{n+1}}{2y-1}-2\big(y^n+(1-y)^n\big)\\
=&\frac{(2-y)y^n-(y+1)(1-y)^n}{2y-1}
\end{align*}
as required. So the second identity holds. Part (i) is proved.

(ii). For each $x_0\in {\mathbb F_{q}\setminus\{\frac{1}{4}\}}$,
one can choose an element $y_0\in {\mathbb F_{q^2}\setminus\{\frac{1}{2}\}}$
such that $x_0=y_0(1-y_0)$. Since $n_1\equiv n_2\pmod{q^2-1}$,
one has $y_0^{n_1}=y_0^{n_2}$ and $(1-y_0)^{n_1}=(1-y_0)^{n_2}$.
It then follows from part (i) that
\begin{align*}
D_{n_1,3}(1,x_0)&=D_{n_1,3}(1,y_0(1-y_0))\\
&=\frac{(2-y_0)y_0^{n_1}-(y_0+1)(1-y_0)^{n_1}}{2y_0-1}\\
&=\frac{(2-y_0)y_0^{n_2}-(y_0+1)(1-y_0)^{n_2}}{2y_0-1}\\
&=D_{n_2,3}(1,x_0)
\end{align*}
as desired. This ends the proof of Theorem 2.4.
\end{proof}

Evidently, by Theorem 2.2 (i) and Theorem 2.4 (i) one can derive
that (1.3) and (1.4) are true.

\begin{prop}
Let $n\ge 2$ be an integer. Then the recursion
$$D_{n,3}(1,x)=D_{n-1,3}(1,x)-xD_{n-2,3}(1,x)$$
holds for any $x\in {\mathbb F_{q}}$.
\end{prop}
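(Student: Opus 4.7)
The plan is to reduce the recursion to the classical second-order recursion satisfied by the powers of the two roots of the quadratic $t^2 - t + x = 0$, by way of the closed-form expression proved in Theorem 2.4(i).

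First, handle the generic case $x \in \mathbb{F}_q \setminus \{1/4\}$. Since $1 - 4x \ne 0$, one can pick $y \in \mathbb{F}_{q^2}$ with $y \ne 1/2$ and $x = y(1-y)$, exactly as in the proof of Theorem 2.4(ii). Then $y$ and $1-y$ are the two roots of $t^2 - t + x = 0$, so for every integer $m \ge 2$ one has $y^m = y^{m-1} - x y^{m-2}$ and $(1-y)^m = (1-y)^{m-1} - x(1-y)^{m-2}$. By Theorem 2.4(i), for every $m \ge 0$,
$$(2y-1)\, D_{m,3}(1,x) = (2-y)\, y^m - (y+1)(1-y)^m.$$
Applying the two recursions termwise on the right-hand side at $m = n$ yields
$$(2y-1)\, D_{n,3}(1,x) = (2y-1)\, D_{n-1,3}(1,x) - x\, (2y-1)\, D_{n-2,3}(1,x),$$
and dividing by $2y-1 \ne 0$ gives the desired identity at this $x$.

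Second, handle the exceptional value $x = 1/4$. By Theorem 2.4(i), $D_{m,3}(1, 1/4) = (3m-1)/2^m$ for every $m \ge 0$, so a direct computation gives
$$D_{n-1,3}\!\left(1,\tfrac14\right) - \tfrac14\, D_{n-2,3}\!\left(1,\tfrac14\right) = \frac{3n-4}{2^{n-1}} - \frac{3n-7}{2^n} = \frac{3n-1}{2^n} = D_{n,3}\!\left(1,\tfrac14\right),$$
confirming the recursion at $x = 1/4$.

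There is no serious obstacle; the substance of the argument is already packaged in Theorem 2.4(i). The only subtlety worth flagging is that the parametrization $x = y(1-y)$ degenerates precisely when $x = 1/4$ (forcing $y = 1/2$, at which point the closed-form numerator and denominator both vanish), so that single value must be treated separately from the $y$-based manipulation.
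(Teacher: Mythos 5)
Your proof is correct and follows essentially the same route as the paper: both split into the cases $x\ne\frac14$ (parametrize $x=y(1-y)$ and use the closed form of Theorem 2.4(i), your root-recursion phrasing being just a repackaging of the paper's direct algebraic simplification) and $x=\frac14$ (the identical computation with $\frac{3m-1}{2^m}$). No gaps.
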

\begin{proof} We consider the following two cases.

{\sc Case 1.} $x\ne \frac{1}{4}$. For this case, one may let $x=y(1-y)$
with $y\in {\mathbb F_{q^2}}\setminus \{ \frac{1}{2}\}$.
Then by Theorem 2.4 (i), we have
\begin{align*}
&\ \ \ \ \ D_{n-1,3}(1,x)-xD_{n-2,3}(1,x)\\
&=D_{n-1,3}(1,y(1-y))-y(1-y)D_{n-2,3}(1,y(1-y))\\
&=\frac{(2-y)y^{n-1}-(y+1)(1-y)^{n-1}}{2y-1}-y(1-y)\frac{(2-y)y^{n-2}-(y+1)(1-y)^{n-2}}{2y-1}\\
&=\frac{(2-y)y^{n}-(y+1)(1-y)^{n}}{2y-1}\\
&=D_{n,3}(1,x)
\end{align*}
as required.

{\sc Case 2.} $x=\frac{1}{4}$. Then by Theorem 2.4 (i), we have
\begin{align*}
&D_{n-1,3}\Big(1,\frac{1}{4}\Big)-\frac{1}{4}D_{n-2,3}\Big(1,\frac{1}{4}\Big)\\
=&\frac{3n-4}{2^{n-1}}-\frac{1}{4}\frac{3n-7}{2^{n-2}}\\
=&\frac{3n-1}{2^{n}}\\
=&D_{n,3}\Big(1,\frac{1}{4}\Big).
\end{align*}

This concludes the proof of Proposition 2.5.
\end{proof}

By Proposition 2.5, we can obtain the generating function of
the revered Dickson polynomial $D_{n,3}(1,x)$ of the fourth kind as follows.
\begin{prop}
The generating function of $D_{n,3}(1,x)$ is given by
$$\sum_{n=0}^{\infty}D_{n,3}(1,x)t^n=\frac{2t-1}{1-t+xt^2}.$$
\end{prop}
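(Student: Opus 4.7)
The plan is to derive the claimed generating function directly from the linear recurrence established in Proposition 2.5, by multiplying the formal power series $F(t) := \sum_{n=0}^{\infty} D_{n,3}(1,x)t^n$ by the denominator $1-t+xt^2$ and checking that the product truncates.

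First I would record the initial values. From the defining formula $(1.2)$ we have $D_{0,3}(1,x) = -1$ (by convention) and $D_{1,3}(1,x) = 1$ (the sum has only the $i=0$ term, giving $\frac{1}{1}\binom{1}{0} \cdot 1 = 1$). Next, I would compute the coefficient of $t^n$ in $(1-t+xt^2)F(t)$. For $n \geq 2$, this coefficient equals
\begin{equation*}
D_{n,3}(1,x) - D_{n-1,3}(1,x) + x D_{n-2,3}(1,x),
\end{equation*}
which vanishes by Proposition 2.5. For $n = 0$, the coefficient is $D_{0,3}(1,x) = -1$, and for $n = 1$ it is $D_{1,3}(1,x) - D_{0,3}(1,x) = 1 - (-1) = 2$.

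Hence $(1 - t + xt^2) F(t) = -1 + 2t = 2t - 1$, and dividing yields
\begin{equation*}
F(t) = \frac{2t-1}{1-t+xt^2},
\end{equation*}
as desired. There is essentially no obstacle: the only thing to be careful about is that the recursion from Proposition 2.5 is valid only for $n \geq 2$, so the initial two coefficients of $(1-t+xt^2)F(t)$ must be computed separately and shown to match the numerator $2t-1$; this is the only place where an arithmetic slip could occur.
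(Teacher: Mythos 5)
Your proof is correct and follows essentially the same route as the paper: multiply the series by $1-t+xt^2$, use the recursion of Proposition 2.5 to kill all coefficients from $t^2$ on, and verify that the two initial terms produce $2t-1$. The only difference is cosmetic — you extract coefficients term by term while the paper regroups the three shifted series — and your explicit check of $D_{0,3}(1,x)=-1$ and $D_{1,3}(1,x)=1$ is a welcome addition the paper leaves implicit.
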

\begin{proof}
By the recursion presented in Proposition 2.5, we have
\begin{align*}
&(1-t+xt^2)\sum_{n=0}^{\infty}D_{n,3}(1,x)t^n\\
=&\sum_{n=0}^{\infty}D_{n,3}(1,x)t^n-\sum_{n=0}
^{\infty}D_{n,3}(1,x)t^{n+1}+x\sum_{n=0}^{\infty}D_{n,3}(1,x)t^{n+2}\\
=&2t-1+\sum_{n=0}^{\infty}\big(D_{n+2,3}
(1,x)-D_{n+1,3}(1,x)+xD_{n,3}(1,x)\big)t^{n+2}\\
=&2t-1.
\end{align*}
Thus the desired result follows immediately.
\end{proof}

Now we can use Theorem 2.4 to present an explicit formula for $D_{n, 3}(1, x)$
when $n$ is a power of the characteristic $p$. Then we show that $D_{n, 3}(1, x)$
is not a PP of ${\mathbb F}_q$ in this case.

\begin{prop}
Let $p={\rm char}({\mathbb F_{q}})>3$ and $k$ be a positive integer. Then
$$2^{p^k}D_{p^k,3}(1,x)+1=3(1-4x)^{\frac{p^k-1}{2}}.$$
\end{prop}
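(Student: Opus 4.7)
The plan is to reduce the identity to the substitution $x=y(1-y)$ furnished by Theorem~2.4(i), apply the Frobenius in characteristic $p$, and exploit the fact that $2^{p^k}=2$ in ${\mathbb F}_p$.

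First I would handle the generic case $x\neq \tfrac14$. Writing $x=y(1-y)$ with $y\in{\mathbb F}_{q^2}\setminus\{\tfrac12\}$, Theorem~2.4(i) gives
\begin{equation*}
D_{p^k,3}(1,x)=\frac{(2-y)y^{p^k}-(y+1)(1-y)^{p^k}}{2y-1}.
\end{equation*}
Since ${\rm char}({\mathbb F}_q)=p$, the Frobenius acts additively, so $(1-y)^{p^k}=1-y^{p^k}$. Expanding and simplifying the numerator collapses the expression to
\begin{equation*}
D_{p^k,3}(1,x)=\frac{3y^{p^k}-(y+1)}{2y-1}.
\end{equation*}

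Next I would use Fermat's little theorem to observe that $2^{p^k}\equiv 2\pmod p$, so $2^{p^k}=2$ inside ${\mathbb F}_q\supset{\mathbb F}_p$. Consequently $(2y-1)^{p^k}=2^{p^k}y^{p^k}-1=2y^{p^k}-1$. Multiplying the previous identity by $2^{p^k}=2$ and adding $1$ yields
\begin{equation*}
2^{p^k}D_{p^k,3}(1,x)+1=\frac{2(3y^{p^k}-y-1)+(2y-1)}{2y-1}=\frac{3(2y^{p^k}-1)}{2y-1}=\frac{3(2y-1)^{p^k}}{2y-1}=3(2y-1)^{p^k-1}.
\end{equation*}
Finally, since $(2y-1)^2=1-4y(1-y)=1-4x$ and $p^k-1$ is even, we get $3(2y-1)^{p^k-1}=3(1-4x)^{(p^k-1)/2}$, which is the required identity on ${\mathbb F}_q\setminus\{\tfrac14\}$.

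To close, I would treat $x=\tfrac14$ separately: by Theorem~2.4(i) and the fact that $3p^k-1=-1$ and $2^{p^k}=2$ in ${\mathbb F}_q$, one has $2^{p^k}D_{p^k,3}(1,\tfrac14)+1=3p^k-1+1=0$, matching the right-hand side $3(1-4\cdot\tfrac14)^{(p^k-1)/2}=0$. Since both sides are polynomials in $x$ of degree $(p^k-1)/2$ and the polynomial identity holds at every point of ${\mathbb F}_q$, or more cleanly holds as a rational-function identity in $y$, equality of polynomials in $x$ follows. The only subtlety is the observation $2^{p^k}=2$, which is what makes the numerator $2(3y^{p^k}-y-1)+(2y-1)$ fold up neatly into $3(2y-1)^{p^k}$; every other step is routine Frobenius manipulation.
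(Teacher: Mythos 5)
Your proof is correct and follows essentially the same route as the paper: both parametrize $x=y(1-y)$ via Theorem~2.4(i), apply the Frobenius to collapse the numerator, and use $(2y-1)^2=1-4x$; the paper merely changes variables to $u=2y-1$ before expanding, whereas you fold the factor $2^{p^k}=2$ into $(2y-1)^{p^k}$ afterwards. Your extra check at $x=\tfrac14$ and the remark that the computation is really a rational-function identity in $y$ are harmless additions that the paper leaves implicit.
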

\begin{proof}
Putting $x=y(1-y)$ in Theorem 2.4 (i) gives us that
\begin{align*}
D_{p^k,3}(1,x)=&D_{p^k,3}(1,y(1-y))\\
=&\frac{(2-y)y^{p^k}-(y+1)(1-y)^{p^k}}{2y-1}\\
=&\frac{\frac{3-u}{2}\big(\frac{u+1}{2}\big)^{p^k}
-\frac{3+u}{2}\big(\frac{1-u}{2}\big)^{p^k}}{u}\\
=&\frac{1}{2^{p^k+1}u}\Big((3-u)(u+1)^{p^k}-(u+3)(1-u)^{p^k}\Big)\\
=&\frac{1}{2^{p^k}}(3u^{p^k-1}-1),
\end{align*}
where $u=2y-1$. So we obtain that
\begin{align*}
&2^{p^k}D_{p^k,3}(1,x)\\
=&3(u^2)^{\frac{p^k-1}{2}}-1\\
=&3\big((2y-1)^2\big)^{\frac{p^k-1}{2}}-1,
\end{align*}
which infers that
$$2^{p^k}D_{p^k,3}(1,x)+1=3(1-4x)^{\frac{p^k-1}{2}}$$
as desired. So Proposition 2.7 is proved.
\end{proof}

It is well known that every linear polynomial over
${\mathbb F_{q}}$ is a PP of ${\mathbb F_{q}}$
and that the monomial $x^n$ is a PP of
${\mathbb F_{q}}$ if and only if $\gcd(n,q-1)=1$. Then by
Proposition 2.7, we have the following result.
\begin{cor}
Let $p>3$ be a prime and $q=p^e$. Let $e$ and $k$ be positive integers with
$k\le e$. Then $D_{p^k,3}(1,x)$ is not a PP of ${\mathbb F_{q}}$.
\end{cor}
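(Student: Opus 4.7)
The plan is to combine Proposition 2.7 with the well-known monomial PP criterion, reducing the problem to a gcd computation. Since $p>3$, both $3$ and $2^{p^k}$ are units in $\mathbb{F}_q$, and Proposition 2.7 rewrites as
$$D_{p^k,3}(1,x) \;=\; \frac{3}{2^{p^k}}\,(1-4x)^{(p^k-1)/2} \;-\; \frac{1}{2^{p^k}}.$$
The map $x\mapsto 1-4x$ is a linear bijection of $\mathbb{F}_q$, and both multiplication by the nonzero constant $3/2^{p^k}$ and translation by a constant preserve permutation status (using Lemma 2.1 together with the trivial fact that $f(x)+c$ is a PP iff $f(x)$ is). Hence $D_{p^k,3}(1,x)$ is a PP of $\mathbb{F}_q$ if and only if the monomial $x^{(p^k-1)/2}$ is a PP, which by the standard criterion quoted just before the corollary is equivalent to
$$\gcd\!\left(\tfrac{p^k-1}{2},\; q-1\right) \;=\; 1.$$

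The remaining task is to show this gcd is strictly greater than $1$. I would factor $p^k-1 = (p-1)(p^{k-1}+p^{k-2}+\cdots+1)$; since $p$ is odd, $(p-1)/2$ is a positive integer, and therefore
$$\frac{p^k-1}{2} \;=\; \frac{p-1}{2}\,\bigl(p^{k-1}+p^{k-2}+\cdots+1\bigr),$$
so $(p-1)/2$ divides $(p^k-1)/2$. Trivially $(p-1)/2$ also divides $p-1$, hence divides $p^e-1=q-1$. Because $p\ge 5$, we have $(p-1)/2\ge 2$, so $(p-1)/2$ is a common divisor exceeding $1$, and the gcd is $>1$. Consequently $x^{(p^k-1)/2}$ is not a PP of $\mathbb{F}_q$, and neither is $D_{p^k,3}(1,x)$.

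I foresee no real obstacle: the substantive content is entirely packaged in Proposition 2.7 and the monomial PP criterion, and the gcd bound is elementary. It is worth remarking that the hypothesis $k\le e$ is not actually used in this argument---the conclusion holds for every positive integer $k$ whenever $p>3$---so this appears to be stated only for bookkeeping. The hypothesis $p>3$, on the other hand, is essential: it guarantees that $3$ and $2^{p^k}$ are units and that $(p-1)/2\ge 2$.
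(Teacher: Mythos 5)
Your proposal is correct and follows essentially the same route as the paper: reduce via Proposition 2.7 to the monomial $x^{(p^k-1)/2}$, invoke the criterion $\gcd\big(\frac{p^k-1}{2},q-1\big)=1$, and rule it out by noting that $\frac{p-1}{2}>1$ divides both $\frac{p^k-1}{2}$ and $q-1$. Your side remarks (making the affine reductions explicit, and observing that the hypothesis $k\le e$ is never used) are accurate but do not change the argument.
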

\begin{proof}
By Proposition 2.7, we know that $D_{p^k,3}(1,x)$
is a PP of ${\mathbb F_{q}}$ if and only if
$$(1-4x)^{\frac{p^k-1}{2}}$$
is a PP of ${\mathbb F_{q}}$ which is equivalent to
$$\gcd\Big(\frac{p^k-1}{2},q-1\Big)=1.$$
The latter one is impossible since
$\frac{p-1}{2}|\gcd\big(\frac{p^k-1}{2},q-1\big)$ implies that
$$\gcd\Big(\frac{p^k-1}{2},q-1\Big)\ge\frac{p-1}{2}>1.$$
Thus $D_{p^k,3}(1,x)$ is not a PP of ${\mathbb F_{q}}$.
\end{proof}
\begin{lem}\cite{[HMSY]}
Let $x\in{\mathbb F_{q^2}}$. Then $x(1-x)\in{\mathbb F_{q}}$
if and only if $x^q=x$ or $x^q=1-x$.
\end{lem}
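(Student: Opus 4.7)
The plan is to observe that both $x$ and $1-x$ are roots of the same quadratic polynomial over $\mathbb{F}_q$, and then to exploit the Frobenius automorphism $z \mapsto z^q$ of $\mathbb{F}_{q^2}/\mathbb{F}_q$.

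For the easier direction, I would handle the two cases separately. If $x^q = x$ then $x \in \mathbb{F}_q$ and so $x(1-x) \in \mathbb{F}_q$ trivially. If $x^q = 1-x$, then the Frobenius satisfies
\[
(x(1-x))^q = x^q (1-x)^q = x^q (1 - x^q) = (1-x) \cdot x = x(1-x),
\]
so $x(1-x)$ is fixed by Frobenius and hence lies in $\mathbb{F}_q$.

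For the converse, set $c := x(1-x)$ and assume $c \in \mathbb{F}_q$. Then $x$ is a root of the polynomial
\[
f(t) := t^2 - t + c \in \mathbb{F}_q[t],
\]
since $x^2 - x + c = x^2 - x + x(1-x) = 0$. By Vieta, the two roots of $f$ are $x$ and $1-x$ (they sum to $1$ and have product $c$). Applying the Frobenius endomorphism to the equation $f(x) = 0$ and using $c \in \mathbb{F}_q$, we get
\[
f(x^q) = (x^q)^2 - x^q + c = (x^2 - x + c)^q = 0,
\]
so $x^q$ is also a root of $f$. Hence $x^q \in \{x, 1-x\}$, which is the desired conclusion.

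There is essentially no obstacle here; the only subtlety worth mentioning is that the statement requires $x \in \mathbb{F}_{q^2}$ so that the Frobenius-squared $x \mapsto x^{q^2}$ fixes $x$, which is what makes ``$x^q$ is a root of a degree-$2$ polynomial over $\mathbb{F}_q$'' a useful constraint. Otherwise the argument is just the standard orbit-of-Frobenius trick applied to the quadratic whose roots are $\{x, 1-x\}$.
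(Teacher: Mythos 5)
Your proof is correct: both directions are standard Frobenius arguments, and the key step (that $x^q$ must be a root of $t^2-t+c$ whose root set is $\{x,1-x\}$) is sound. The paper itself states this lemma without proof, citing [HMSY], so there is nothing to compare against; your closing remark about needing $x\in\mathbb{F}_{q^2}$ is actually superfluous (the Vieta argument already pins down the root set), but it does no harm.
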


Let $V$ be defined by
$$V:=\{x\in {\mathbb F_{q^2}}: x^q=1-x\}.$$
Clearly, ${\mathbb F_{q}}\cap V=\{\frac{1}{2}\}$. Then we obtain a characterization for
$D_{n,3}(1,x)$ to be a PP of ${\mathbb F_{q}}$ as follows.
\begin{thm}
Let $q=p^e$ with $p>3$ being a prime and $e$ being a positive integer. Let
$$f:y\mapsto\frac{(2-y)y^n-(y+1)(1-y)^n}{2y-1}$$
be a mapping on $({\mathbb F_{q}}\cup V)\setminus \{\frac{1}{2}\}$. Then $D_{n,3}(1,x)$ is
a PP of ${\mathbb F_{q}}$ if and only if $f$ is $2$-to-$1$ and $f(y)\ne\frac{3n-1}{2^n}$
for any $y\in({\mathbb F_{q}}\cup V)\setminus \{\frac{1}{2}\}$.
\end{thm}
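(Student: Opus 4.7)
The plan is to translate the permutation property of $D_{n,3}(1,x)$ on $\mathbb{F}_q$ into a statement about the auxiliary map $f$, using the change of variables $x=y(1-y)$ together with the closed-form expression of Theorem 2.4(i). The key observation is that $f$ has the same fibers on $(\mathbb{F}_q\cup V)\setminus\{\frac{1}{2}\}$ as the doubly covering map $\phi:y\mapsto y(1-y)$, so injectivity of $D_{n,3}(1,\cdot)$ on $\mathbb{F}_q\setminus\{\frac{1}{4}\}$ becomes precisely the $2$-to-$1$ property of $f$.

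First I would establish that $\phi:(\mathbb{F}_q\cup V)\setminus\{\frac{1}{2}\}\to\mathbb{F}_q\setminus\{\frac{1}{4}\}$ is a surjective $2$-to-$1$ map whose fibers are exactly the pairs $\{y,1-y\}$. Given $x\in\mathbb{F}_q\setminus\{\frac{1}{4}\}$, the two roots of $T^2-T+x=0$ lie in $\mathbb{F}_{q^2}$, sum to $1$, and are distinct (since $x\neq 1/4$); by Lemma 2.9 they lie in $\mathbb{F}_q\cup V$, which gives both surjectivity and the description of the fibers. I would also verify the elementary symmetry $f(1-y)=f(y)$ by direct substitution in the defining formula for $f$.

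Next, I would combine Theorem 2.4(i) with the above. On $\mathbb{F}_q\setminus\{\frac{1}{4}\}$ we have $D_{n,3}(1,x)=f(y)$ whenever $x=\phi(y)$, so $D_{n,3}(1,\cdot)$ is injective on $\mathbb{F}_q\setminus\{\frac{1}{4}\}$ iff $f(y_1)\neq f(y_2)$ whenever $\{y_1,1-y_1\}\neq\{y_2,1-y_2\}$. Combined with the identity $f(y)=f(1-y)$, this says exactly that the fibers of $f$ have size two throughout, i.e., $f$ is $2$-to-$1$ (the cardinalities are consistent: a domain of size $|(\mathbb{F}_q\cup V)\setminus\{\frac{1}{2}\}|=2q-2$ maps onto an image of size $q-1$). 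Separately, the isolated value $D_{n,3}(1,\frac{1}{4})=\frac{3n-1}{2^n}$ given by Theorem 2.4(i) must differ from every value taken on $\mathbb{F}_q\setminus\{\frac{1}{4}\}$, which translates to $\frac{3n-1}{2^n}\notin f\big((\mathbb{F}_q\cup V)\setminus\{\frac{1}{2}\}\big)$. Assembling the two conditions yields the claimed biconditional.

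The argument is essentially a bookkeeping exercise, so there is no real obstacle once Lemma 2.9 and Theorem 2.4(i) are in hand. The one minor point requiring care is to check that $f\big((\mathbb{F}_q\cup V)\setminus\{\frac{1}{2}\}\big)$ actually lies in $\mathbb{F}_q$, so that comparing its values with $\frac{3n-1}{2^n}\in\mathbb{F}_q$ is meaningful; this is automatic from $f(y)=D_{n,3}(1,\phi(y))$ together with $\phi(y)\in\mathbb{F}_q$.
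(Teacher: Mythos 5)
Your proposal is correct and follows essentially the same route as the paper: both arguments rest on Lemma 2.9 to identify $(\mathbb{F}_q\cup V)\setminus\{\tfrac12\}$ as the full preimage of $\mathbb{F}_q\setminus\{\tfrac14\}$ under $y\mapsto y(1-y)$, on Theorem 2.4(i) to write $D_{n,3}(1,y(1-y))=f(y)$ with the special value $\tfrac{3n-1}{2^n}$ at $x=\tfrac14$, and on the symmetry $f(y)=f(1-y)$. The paper merely unpacks your fiber-counting argument into explicit sufficiency/necessity cases.
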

\begin{proof}
First, we show the sufficiency part. Let $f$ be $2$-to-$1$ and $f(y)\ne\frac{3n-1}{2^n}$
for any $y\in({\mathbb F_{q}}\cup V)\setminus \{\frac{1}{2}\}$.
Let $D_{n,3}(1,x_1)=D_{n,3}(1,x_2)$ for $x_1,x_2\in{\mathbb F_{q}}$. To show that
$D_{n, 3}(1, x)$ is a PP of ${\mathbb F_{q}}$, it suffices to show that $x_1=x_2$
that will be done in what follows.

First of all, one can find $y_1,y_2\in{\mathbb F_{q^2}}$ satisfying
$x_1=y_1(1-y_1)$ and $x_2=y_2(1-y_2)$.
By Lemma 2.9, we know that $y_1,y_2\in{\mathbb F_{q}}\cup V$.
We divide the proof into the following two cases.

{\sc Case 1.} At least one of $x_1$ and $x_2$ is equal to $\frac{1}{4}$.
Without loss of any generality, we may let $x_1=\frac{1}{4}$.
So by Theorem 2.4 (i), one derives that
\begin{equation}\label{c4}
D_{n,3}(1,x_2)=D_{n,3}(1,x_1)=D_{n,3}\Big(1,\frac{1}{4}\Big)=\frac{3n-1}{2^n}.
\end{equation}
We claim that $x_2=\frac{1}{4}$. Assume that $x_2\ne\frac{1}{4}$.
Then $y_2\ne \frac{1}{2}$. Since $f(y)\ne\frac{3n-1}{2^n}$
for any $y\in({\mathbb F_{q}}\cup V)\setminus \{\frac{1}{2}\}$,
by Theorem 2.4 (i), we get that
$$D_{n,3}(1,x_2)=\frac{(2-y_2)y_2^n-(y_2+1)(1-y_2)^n}{2y_2-1}=f(y_2)\ne \frac{3n-1}{2^n},$$
which contradicts to (\ref{c4}). Hence the claim is true, and so we have $x_1=x_2$ as required.

{\sc Case 2.} Both of $x_1$ and $x_2$ are not equal to $\frac{1}{4}$. Then
$y_1\ne \frac{1}{2}$ and $y_2\ne \frac{1}{2}$. Since $D_{n,3}(1,x_1)=D_{n,3}(1,x_2)$,
by Theorem 2.4 (i), one has
$$\frac{(2-y_2)y_1^n-(y_1+1)(1-y_1)^n}{2y_1-1}=\frac{(2-y_2)
y_2^n-(y_2+1)(1-y_2)^n}{2y_2-1},$$
which is equivalent to $f(y_1)=f(y_2)$. However, $f$ is a
$2$-to-$1$ mapping on $({\mathbb F_{q}}\cup V)\setminus \{\frac{1}{2}\}$, and $f(y_2)=f(1-y_2)$ by
the definition of $f$. It then follows that $y_1=y_2$ or $y_1=1-y_2$. Thus $x_1=x_2$ as desired.
Hence the sufficiency part is proved.

Now we prove the necessity part. Let $D_{n,3}(1,x)$ be a PP of ${\mathbb F_{q}}$.
Choose two elements $y_1, y_2\in({\mathbb F_{q}}\cup V)\setminus \{\frac{1}{2}\}$
such that $f(y_1)=f(y_2)$, that is,
\begin{equation}\label{c5}
\frac{(2-y_1)y_1^n-(y_1+1)(1-y_1)^n}{2y_1-1}=\frac{(2-y_2)
y_2^n-(y_2+1)(1-y_2)^n}{2y_2-1}.
\end{equation}
Since $y_1, y_2\in({\mathbb F_{q}}\cup V)\setminus \{\frac{1}{2}\}$,
it follows from Lemma 2.9 that $y_1(1-y_1)\in{\mathbb F_{q}}$ and
$y_2(1-y_2)\in{\mathbb F_{q}}$. So by Theorem 2.4 (i), (\ref{c5}) implies that
$$D_{n,3}(1,y_0(1-y_0))=D_{n,3}(1,y(1-y)).$$
Thus $y_1(1-y_1)=y_2(1-y_2)$ since $D_{n,3}(1,x)$ is a PP of ${\mathbb F_{q}}$,
which infers that $y_1=y_2$ or $y_1=1-y_2$. Since $y_2\ne \frac{1}{2}$,
one has $y_2\ne 1-y_2$. Therefore $f$ is a
$2$-to-$1$ mapping on $({\mathbb F_{q}}\cup V)\setminus \{\frac{1}{2}\}$.

Now take $y'\in ({\mathbb F_{q}}\cup V)\setminus \{\frac{1}{2}\}$.
Then from Lemma 2.9 it follows that $y'(1-y')\in{\mathbb F_{q}}$ and
$$y'(1-y')\ne \frac{1}{2}\Big(1-\frac{1}{2}\Big).$$
Notice that $D_{n,3}(1,x)$ is a PP of ${\mathbb F_{q}}$. Hence one has
$$D_{n,3}(1,y'(1-y'))\ne D_{n,3}\Big(1,\frac{1}{2}\Big(1-\frac{1}{2}\Big)\Big).$$
But Theorem 2.4 (i) tells us that
$$
D_{n,3}\Big(1,\frac{1}{2}\Big(1-\frac{1}{2}\Big)\Big)=\frac{3n-1}{2^n}.
$$
Then by Theorem 2.4 (i) and noting that $y'\ne \frac{1}{2}$, we have
$$\frac{(2-y')y'^n-(y'+1)(1-y')^n}{2y'-1}\ne \frac{3n-1}{2^n},$$
which infers that $f(y')\ne \frac{3n-1}{2^n}$ for any
$y'\in ({\mathbb F_{q}}\cup V)\setminus \{\frac{1}{2}\}$.
So the necessity part is proved.

The proof of Theorem 2.10 is complete.
\end{proof}

\section{A necessary condition for $D_{n,3}(1,x)$ to be permutational and an auxiliary polynomial}

In this section, we study some necessary conditions on $n$ for $D_{n,3}(1,x)$
to be a PP of ${\mathbb F_{q}}$. It is easy to check that
$$D_{n,3}(1,0)=1, D_{0,3}(1,1)=-1, D_{1,3}(1,1)=1, D_{0,3}(1,-2)=-1, D_{1,3}(1,-2)=1.$$
Then by Proposition 2.5, we have the following recursion relations
$$
{\left\{
  \begin{array}{ll}
D_{0,3}(1,1)=-1,\\
D_{1,3}(1,1)=1,\\
D_{n+2,3}(1,1)=D_{n+1,3}(1,1)-D_{n,3}(1,1),
\end{array}
\right.}
$$
and
$${\left\{
  \begin{array}{ll}
D_{0,3}(1,-2)=-1,\\
D_{1,3}(1,-2)=1,\\
D_{n+2,3}(1,-2)=D_{n+1,3}(1,-2)+2D_{n,3}(1,-2).
\end{array}
\right.}
$$
From these recursive formulas, one can easily show that the sequences
$$\{D_{n,3}(1,1)|n\in \mathbb{N}\} \ {\rm and} \ \{D_{n,3}(1,-2)|n\in \mathbb{N}\}$$
are periodic with the smallest periods 6 and 2, respectively. In fact, one has
$$
D_{n,3}(1,1)={\left\{
  \begin{array}{rl}
1,& {\rm if} \ n\equiv 1,3\pmod{6},\\
-1,& {\rm if} \ n\equiv 0,4\pmod{6},\\
2,& {\rm if} \ n\equiv 2\pmod{6},\\
-2,& {\rm if} \ n\equiv 5\pmod{6}
\end{array}
\right.}
$$
and
$$
D_{n,3}(1,-2)={\left\{
  \begin{array}{rl}
1,& {\rm if} \ n\equiv 1\pmod{2},\\
-1,& {\rm if} \ n\equiv 0\pmod{2}.
\end{array}
\right.}
$$
\begin{thm}
Assume that $D_{n,3}(1,x)$ is a PP of ${\mathbb F_{q}}$ with $q=p^e$ and $p>3$. Then
$n\equiv 2 \pmod{6}$.
\end{thm}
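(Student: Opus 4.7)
The plan is to test the injectivity hypothesis at the three elements $0$, $1$, $-2$ of $\mathbb{F}_{q}$, which are pairwise distinct precisely because $p>3$, and to extract the congruence on $n$ from the values of $D_{n,3}(1,\cdot)$ at these three points already tabulated immediately before the statement.

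First I would record the trivial evaluation $D_{n,3}(1,0)=1$, which follows at once from (\ref{c2}) since only the $i=0$ term survives. Because $D_{n,3}(1,x)$ is assumed to be a PP, the value $D_{n,3}(1,-2)$ must differ from $D_{n,3}(1,0)=1$. Reading from the tabulated sequence $\{D_{n,3}(1,-2)\}$, which equals $1$ for odd $n$ and $-1$ for even $n$, this forces $n$ to be even, and in particular pins down $D_{n,3}(1,-2)=-1$.

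Next I would impose $D_{n,3}(1,1)\ne D_{n,3}(1,0)=1$ and $D_{n,3}(1,1)\ne D_{n,3}(1,-2)=-1$. Using the mod-$6$ table of $D_{n,3}(1,1)$ deduced from the linear recursion in Proposition 2.5, the residues giving $\pm 1$ are $n\equiv 0,1,3,4\pmod{6}$, the residue $n\equiv 2\pmod 6$ gives value $2$, and the residue $n\equiv 5\pmod 6$ gives value $-2$. Since $n$ has already been constrained to be even, the odd residues $1,3,5$ are automatically excluded; discarding the even residues $0$ and $4$ then leaves exactly $n\equiv 2\pmod 6$.

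The argument is essentially a bookkeeping check against the two periodic tables already established, and has no deep obstacle. The one point that must be verified, and the only place where the standing hypothesis $p>3$ is actually invoked, is the pairwise distinctness of $0$, $1$, $-2$ in $\mathbb{F}_{q}$: the coincidence $1=-2$ would mean $p\mid 3$, which is ruled out, and the other two coincidences are excluded by $p\ne 2$ and by $1\ne 0$.
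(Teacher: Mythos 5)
Your proposal is correct and follows essentially the same route as the paper: both arguments evaluate $D_{n,3}(1,\cdot)$ at the three distinct points $0$, $1$, $-2$ (distinct because $p>3$), use the tabulated periodic sequences at $1$ and $-2$ derived from Proposition 2.5, first force $n$ even from $D_{n,3}(1,-2)\ne 1$, and then eliminate all residues mod $6$ except $2$. The only difference is cosmetic bookkeeping in how the residue classes are discarded.
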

\begin{proof}
Let $D_{n,3}(1,x)$ be a PP of ${\mathbb F_{q}}$. Then $D_{n,3}(1,0), D_{n,3}(1,1)$
and $D_{n,3}(1,-2)$ are distinct. Since $D_{n,3}(1,0)=1$, one has $D_{n,3}(1,1)\ne 1$
and $D_{n,3}(1,-2)\ne 1$. Then the above results tells us that $n\not\equiv 1,3,5 \pmod 6$.
Further, we have $D_{n,3}(1,-2)=-1$ which means that $n$ must be even, and so $D_{n,3}(1,1)\ne -2$.
But $D_{n,3}(1,1)\ne D_{n,3}(1,-2)$. So $D_{n,3}(1,1)\ne -1$. Hence $D_{n,3}(1,1)=2$.
Finally, the desired result $n\equiv 2 \pmod{6}$ follows immediately.
\end{proof}

Evidently, Corollary 2.8 can be easily deduced from Theorem 3.1.
Furthermore, By Theorem 3.1, we know that $D_{n,3}(1,x)$
is not a PP of ${\mathbb F_{q}}$ if $n$ is odd.

In what follows, we investigate $D_{n,3}(1,x)$ with $n$
being an even number. We define the following
auxiliary polynomial $f_{n}(x)\in \mathbb{Z}[x]$ by
$$f_{n}(x):=-x^{\frac{n}{2}}+\sum_{j=0}^{\frac{n}{2}-1}
\frac{3n-8j-1}{n+1}\binom{n+1}{2j+1}x^j.$$
Then we have the following relation between $D_{n,3}(1,x)$ and $f_{n}(x)$.
\begin{thm}
Let $p>3$ be a prime and $n\ge 0$ be an even integer. Then

{\rm (i).} One has
$$D_{n,3}(1,x)=\frac{1}{2^n}f_{n}(1-4x). \eqno(3.1)$$

{\rm (ii).} We have that $D_{n,3}(1,x)$ is a PP of ${\mathbb F_{q}}$
if and only if $f_n(x)$ is a PP of ${\mathbb F_{q}}$.
\end{thm}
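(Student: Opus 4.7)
The plan is to prove part (i) by substituting $u = 2y-1$ into the closed form from Theorem~2.4(i), and then to deduce (ii) from (i) via an affine bijection argument.

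For part (i), I fix $x \in {\mathbb F}_{q}\setminus\{\frac{1}{4}\}$, write $x = y(1-y)$ with $y \in {\mathbb F}_{q^2}\setminus\{\frac{1}{2}\}$, and put $u = 2y-1$. Then $u^2 = 1-4x$ and
$$y = \frac{1+u}{2},\quad 1-y = \frac{1-u}{2},\quad 2-y = \frac{3-u}{2},\quad y+1 = \frac{3+u}{2},$$
so Theorem~2.4(i) rewrites as
$$D_{n,3}(1,x) = \frac{(3-u)(1+u)^n - (3+u)(1-u)^n}{2^{n+1}\,u}.$$
The numerator decomposes as $3\bigl[(1+u)^n-(1-u)^n\bigr] - u\bigl[(1+u)^n+(1-u)^n\bigr]$; because $n$ is even, the first bracket contains only odd powers of $u$ and the second only even powers of $u$, so the entire numerator consists solely of odd powers of $u$. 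Dividing by $u$ therefore leaves only powers of $u^2 = 1-4x$, which is precisely why the expression is a polynomial in $x$.

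An explicit binomial expansion yields
$$(3-u)(1+u)^n - (3+u)(1-u)^n = -2u^{n+1} + 2\sum_{j=0}^{\frac{n}{2}-1}\Big[3\binom{n}{2j+1} - \binom{n}{2j}\Big]u^{2j+1},$$
so that
$$D_{n,3}(1,x) = \frac{1}{2^n}\Big[-(1-4x)^{n/2} + \sum_{j=0}^{\frac{n}{2}-1}\Big(3\binom{n}{2j+1} - \binom{n}{2j}\Big)(1-4x)^j\Big].$$
To match this with $(1/2^n)f_n(1-4x)$, I verify the combinatorial identity
$$3\binom{n}{2j+1} - \binom{n}{2j} = \frac{3n-8j-1}{n+1}\binom{n+1}{2j+1},$$
which follows from $(2j+1)\binom{n}{2j+1} = (n-2j)\binom{n}{2j}$ together with $\binom{n+1}{2j+1} = \frac{n+1}{2j+1}\binom{n}{2j}$. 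The excluded value $x = \frac{1}{4}$ (i.e.\ $u=0$) is handled separately: Theorem~2.4(i) gives $D_{n,3}(1,\frac{1}{4}) = (3n-1)/2^n$, which equals $f_n(0)/2^n$ since only the $j=0$ summand of $f_n$ survives at $0$.

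For part (ii), the assumption $p > 3$ ensures that the affine map $\varphi(x) = 1-4x$ is a bijection of ${\mathbb F}_{q}$ and that $1/2^n \in {\mathbb F}_{q}^{*}$; composing with $\varphi$ and rescaling by a nonzero constant both preserve the permutation property, so $x \mapsto (1/2^n)f_n(1-4x)$ is a PP of ${\mathbb F}_{q}$ if and only if $f_n$ is. By (i) this map coincides with $D_{n,3}(1,x)$, giving (ii). The main technical obstacle will be the binomial bookkeeping in the numerator: one must track signs carefully, use the parity of $n$ to guarantee that only odd powers of $u$ remain, and then perform the combinatorial simplification that identifies the emerging coefficient with $(3n-8j-1)\binom{n+1}{2j+1}/(n+1)$.
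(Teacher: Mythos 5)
Your proposal is correct and follows essentially the same route as the paper's proof: both substitute $u=2y-1$ into the closed form from Theorem~2.4(i), expand $(3-u)(1+u)^n-(3+u)(1-u)^n$ by the binomial theorem, identify the coefficients via the identity $3\binom{n}{2j+1}-\binom{n}{2j}=\frac{3n-8j-1}{n+1}\binom{n+1}{2j+1}$, treat $x=\frac14$ separately through $f_n(0)=3n-1$, and deduce (ii) from the fact that $x\mapsto 1-4x$ is an affine bijection and $\frac{1}{2^n}\in{\mathbb F}_q^*$. The only difference is presentational: you make the odd/even power bookkeeping in $u$ explicit, which the paper leaves implicit.
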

\begin{proof} (i). First, let $x\in{\mathbb F_{q}}\setminus\{\frac{1}{4}\}$. Then there exists
$y\in{\mathbb F_{q^2}}\setminus\{\frac{1}{2}\}$ such that $x=y(1-y)$. Let $u=2y-1$.
Since for any integer $j$ with $0\le j\le \frac{n}{2}-1$, one has
$$
3\binom{n}{2j+1}-\binom{n}{2j}=\frac{3n-8j-1}{n+1}\binom{n+1}{2j+1},
$$
it then follows from Theorem 2.4 (i) that
\begin{align*}
D_{n,3}(1,x)&=D_{n,3}(1,y(1-y))\\
&=\frac{(2-y)y^{n}-(y+1)(1-y)^{n}}{2y-1}\\
&=\frac{\frac{3-u}{2}\big(\frac{u+1}{2}\big)^{n}-\frac{3+u}{2}\big(\frac{1-u}{2}\big)^{n}}{u}\\
&=\frac{1}{2^{n+1}u}\big((3-u)(u+1)^{n}-(u+3)(1-u)^{n}\big)\\
&=\frac{1}{2^{n}}\Big(-u^{n}+\sum_{j=0}^{\frac{n}{2}-1}\Big(3\binom{n}{2j+1}-\binom{n}{2j}\Big)u^{2j}\Big)\\
&=\frac{1}{2^n}f_n(u^2)\\
&=\frac{1}{2^n}f_n(1-4y(1-y))\\
&=\frac{1}{2^n}f_n(1-4x)
\end{align*}
as desired. So (3.1) holds in this case.

Consequently, we let $x=\frac{1}{4}$. Then by Theorem 2.4 (i),
we have
$$D_{n,3}\Big(1,\frac{1}{4}\Big)=\frac{3n-1}{2^n}.$$
On the other hand, we can easily check that $f_n(0)=3n-1$. Therefore
$$D_{n,3}\Big(1,\frac{1}{4}\Big)=\frac{1}{2^n}f_n(0)
=\frac{1}{2^n}f_n\Big(1-4\times \frac{1}{4}\Big)$$
as one desires. So (3.1) is proved.

(ii).
Notice that $\frac{1}{2^n}\in{\mathbb F_{q}^*}$ and $1-4x$
is linear. So $D_{n,3}(1,x)$ is a PP of ${\mathbb F_{q}}$ if and only if
$f_n(x)$ is a PP of ${\mathbb F_{q}}$. This ends the proof of Theorem 3.2.
\end{proof}
\section{The first moment $\sum_{a\in {\mathbb F_{q}}}D_{n,3}(1,a)$}

In this section, we compute the first moment
$\sum_{a\in {\mathbb F_{q}}}D_{n,3}(1,a)$.
By Proposition 2.6, one has
\begin{align}
\sum_{n=0}^{\infty}D_{n,3}(1,x)t^n&=\frac{2t-1}{1-t+xt^2}
=\frac{2t-1}{1-t}\frac{1}{1-\frac{t^2}{t-1}x} \notag\\
&=\frac{2t-1}{1-t}\Big(1+\sum_{k=1}^{q-1}\sum_{\ell=0}^{\infty }
\bigg(\frac{t^2}{t-1}\bigg)^{k+\ell (q-1)}x^{k+\ell (q-1)}\Big) \notag\\
&\equiv\frac{2t-1}{1-t}\Big(1+\sum_{k=1}^{q-1}\sum_{\ell=0}^{\infty }
\bigg(\frac{t^2}{t-1}\bigg)^{k+\ell (q-1)}x^{k}\Big)\pmod{x^q-x} \notag\\
&=\frac{2t-1}{1-t}\Big(1+\sum_{k=1}^{q-1}
\frac{(\frac{t^2}{t-1})^k}{1-(\frac{t^2}{t-1})^{q-1}}x^k\Big) \notag\\
&=\frac{2t-1}{1-t}\Big(1+\sum_{k=1}^{q-1}
\frac{(t-1)^{q-1-k}t^{2k}}{(t-1)^{q-1}-t^{2(q-1)}}x^k\Big).\label{c6}
\end{align}
Moreover, by Theorem 2.4 (ii), it follows that for any
$x\in{\mathbb F_{q}}\setminus\{\frac{1}{4}\}$, one has
$$D_{n_1,3}(1,x)=D_{n_2,3}(1,x)$$
when $n_1\equiv n_2\pmod{q^2-1}$. Thus if $x\ne \frac{1}{4}$, one has
\begin{align}
\sum_{n=0}^\infty D_{n,3}(1,x)t^n&=1+\sum_{n=1}^{q^2-1}
\sum_{\ell=0}^{\infty }D_{n+\ell(q^2-1),3}(1,x)t^{n+\ell(q^2-1)}\notag\\
&=1+\sum_{n=1}^{q^2-1}D_{n,3}(1,x)\sum_{\ell=0}^{\infty }t^{n+\ell(q^2-1)}\notag\\
&=1+\frac{1}{1-t^{q^2-1}}\sum_{n=1}^{q^2-1}D_{n,3}(1,x)t^{n}.\label{c7}
\end{align}
Then (\ref{c6}) together with (\ref{c7}) gives that
for any $x\ne \frac{1}{4}$, we have
\begin{align}
&\sum_{n=1}^{q^2-1}D_{n,3}(1,x)t^n\\
=&\Big(\sum_{n=0}^{\infty}D_{n,3}(1,x)t^n-1\Big)(1-t^{q^2-1})\notag\\
\equiv & \Big(\frac{2t-1}{1-t}-1\Big)(1-t^{q^2-1})
+\frac{(1-t^{q^2-1})(2t-1)}{1-t}\sum_{k=1}^{q-1}
\frac{(t-1)^{q-1-k}t^{2k}}{(t-1)^{q-1}-t^{2(q-1)}}x^k\pmod{x^q-x}\notag\\
=&\frac{(3t-2)(1-t^{q^2-1})}{1-t}+h(t)\sum_{k=1}^{q-1}
(t-1)^{q-1-k}t^{2k}x^k,\label{c8}
\end{align}
where
$$h(t):=\frac{(t^{q^2-1}-1)(2t-1)}{(t-1)^q-(t-1)t^{2(q-1)}}.$$
\begin{lem}\cite{[LN]}
Let $u_0,u_1,\cdots,u_{q-1}$ be the list of the all elements of
${\mathbb F_{q}}$. Then
$$
\sum_{i=0}^{q-1}u_i^k={\left\{
  \begin{array}{rl}
0,& {\rm if} \ 0\le k\le q-2,\\
-1,& {\rm if} \ k=q-1.
\end{array}
\right.}
$$
\end{lem}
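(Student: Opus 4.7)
The plan is to split into three cases according to the value of $k$, handling $k=0$ separately, then the generic range $1\le k\le q-2$ via a primitive root argument, and finally $k=q-1$ via Fermat's little theorem for finite fields.

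First I would dispose of $k=0$: the sum $\sum_{i=0}^{q-1}u_i^0$ equals $q\cdot 1$, which is $0$ in $\mathbb{F}_q$ since $q$ is a power of the characteristic $p$. Next, for $1\le k\le q-2$, note that $0^k=0$, so $\sum_{i=0}^{q-1}u_i^k=\sum_{u\in\mathbb{F}_q^*}u^k$. Since $\mathbb{F}_q^*$ is cyclic of order $q-1$, fix a generator $g$ and rewrite the sum as the geometric series
\begin{equation*}
\sum_{j=0}^{q-2}(g^k)^j=\frac{(g^k)^{q-1}-1}{g^k-1}.
\end{equation*}
The numerator vanishes because $g^{q-1}=1$, and the denominator is nonzero precisely when $g^k\ne 1$, i.e., when $(q-1)\nmid k$; this holds throughout $1\le k\le q-2$, so the sum equals $0$. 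Finally, for $k=q-1$, every nonzero element satisfies $u^{q-1}=1$, giving $\sum_{u\in\mathbb{F}_q^*}u^{q-1}=q-1=-1$ in $\mathbb{F}_q$.

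There is really no serious obstacle here; the only subtlety to flag is making sure the case $1\le k\le q-2$ does not accidentally include a multiple of $q-1$, which is exactly why the range excludes $k=q-1$. The argument is purely elementary, relying only on the cyclicity of $\mathbb{F}_q^*$ and the characteristic of $\mathbb{F}_q$, so no additional machinery beyond what is already cited from \cite{[LN]} is needed.
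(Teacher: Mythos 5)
Your proof is correct, and it is the standard argument: $k=0$ gives $q\cdot 1=0$ in characteristic $p$, the range $1\le k\le q-2$ is handled by summing the geometric series $\sum_{j=0}^{q-2}(g^k)^j$ over a generator $g$ of the cyclic group ${\mathbb F}_q^*$ (nonzero denominator because $(q-1)\nmid k$), and $k=q-1$ gives $q-1=-1$. The paper itself offers no proof of this lemma, quoting it directly from \cite{[LN]}, so there is nothing to compare against; your write-up supplies exactly the expected textbook argument, with the only implicit convention being $0^0=1$ in the $k=0$ case, which matches how the sum is used in Section 4.
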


Now by Theorem 2.4 (i), Lemma 4.1 and (\ref{c8}), we derive that
\begin{align}
&\sum_{n=1}^{q^2-1}\sum_{a\in{\mathbb F_{q}}}D_{n,3}(1,a)t^n\notag\\
=&\sum_{n=1}^{q^2-1}D_{n,3}\Big(1,\frac{1}{4}\Big)t^n+\sum_{n=1}^{q^2-1}
\sum_{a\in{\mathbb F_{q}}\setminus\{\frac{1}{4}\}}D_{n,3}(1,a)t^n\notag\\
=&\sum_{n=1}^{q^2-1}\frac{3n-1}{2^n}t^n+\sum_{a\in{\mathbb F_{q}}
\setminus\{\frac{1}{4}\}}\frac{(3t-2)(1-t^{q^2-1})}{1-t}+h(t)\sum_{k=1}
^{q-1}(t-1)^{q-1-k}t^{2k}\sum_{a\in{\mathbb F_{q}}\setminus\{\frac{1}{4}\}}a^k\notag\\
=&\sum_{n=1}^{q^2-1}\frac{3n-1}{2^n}t^n+(q-1)\frac{(3t-2)(1-t^{q^2-1})}{1-t}
+h(t)\sum_{k=1}^{q-1}(t-1)^{q-1-k}t^{2k}\sum_{a\in{\mathbb F_{q}}}a^k\notag\\
&\ \ \ -h(t)\sum_{k=1}^{q-1}(t-1)^{q-1-k}t^{2k}\Big(\frac{1}{4}\Big)^k\notag\\
=&\sum_{n=1}^{q^2-1}\frac{3n-1}{2^n}t^n-\frac{(3t-2)(1-t^{q^2-1})}{1-t}-h(t)t^{2(q-1)}
-h(t)\sum_{k=1}^{q-1}(t-1)^{q-1-k}t^{2k}\Big(\frac{1}{4}\Big)^k.\label{c9}
\end{align}

Since $(t-1)^q=t^q-1$ and $q$ is odd, one has
\begin{align}
h(t)=&\frac{(t^{q^2-1}-1)(2t-1)}{(t-1)^q-(t-1)t^{2(q-1)}}\notag\\
=&\frac{(t^{q^2-1}-1)(2t-1)}{(1-t^{q-1})(t^q-t^{q-1}-1)} \notag\\
=&\frac{(t^{q^2}-t)(2t-1)}{(t-t^{q})(t^q-t^{q-1}-1)}\notag\\
=&\frac{(t^{q}-t)^q+t^q-t}{t-t^{q}}\cdot \frac{2t-1}{t^q-t^{q-1}-1}\notag\\
=&\frac{(-1-(t-t^q)^{q-1})(2t-1)}{t^q-t^{q-1}-1}\notag\notag\\
=&\frac{(2t-1)\sum_{i=0}^{q^2-q}b_it^i}{t^q-t^{q-1}-1},\label{c10}
\end{align}
where
$$\sum_{i=0}^{q^2-q}b_it^i:=-1-(t-t^q)^{q-1}.$$
Then by the binomial theorem applied to $(t-t^q)^{q-1}$,
we can derive the following expression for the coefficient $b_i$.
\begin{prop}
For each integer $i$ with $0\le i\le q^2-q$, write $i=\alpha+\beta q$
with $\alpha $ and $\beta$ being integers such that
$0\le \alpha,\beta \le q-1$. Then
$$b_i={\left\{
\begin{array}{ll}
(-1)^{\beta +1}\binom{q-1}{\beta},& {\it if} \ \alpha +\beta =q-1,\\
-1,& {\it if} \ \alpha =\beta =0,\\
0,& {\it otherwise}.
\end{array}
\right.}$$
\end{prop}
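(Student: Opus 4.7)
The plan is to read off the $b_i$ directly from a single application of the binomial theorem, the only subtlety being to organize the exponents in base $q$.

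First, I would expand
\begin{equation*}
(t-t^q)^{q-1}=\sum_{j=0}^{q-1}\binom{q-1}{j}(-1)^{j}t^{q-1-j}t^{qj}
=\sum_{j=0}^{q-1}(-1)^{j}\binom{q-1}{j}\,t^{(q-1-j)+jq}.
\end{equation*}
Each exponent appearing here has the shape $\alpha+\beta q$ with $\alpha=q-1-j$ and $\beta=j$, and in particular $0\le\alpha,\beta\le q-1$ together with $\alpha+\beta=q-1$. So after multiplying by $-1$ and subtracting the constant $1$, I get
\begin{equation*}
-1-(t-t^q)^{q-1}=-1+\sum_{\substack{0\le \alpha,\beta\le q-1\\ \alpha+\beta=q-1}}(-1)^{\beta+1}\binom{q-1}{\beta}\,t^{\alpha+\beta q}.
\end{equation*}

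Next I would invoke uniqueness of the base-$q$ representation: every integer $i$ with $0\le i\le q^2-q$ can be written uniquely as $i=\alpha+\beta q$ with $0\le\alpha,\beta\le q-1$ (indeed $\beta=\lfloor i/q\rfloor\le q-1$ because $i\le(q-1)q$, and then $\alpha=i-\beta q$). Therefore, equating coefficients of $t^i$ on both sides of the identity $\sum_{i=0}^{q^2-q}b_i t^i=-1-(t-t^q)^{q-1}$, the binomial expansion contributes to position $i$ only when $\alpha+\beta=q-1$, and the summand $-1$ contributes only to position $i=0$, i.e.\ to the pair $\alpha=\beta=0$. Note that $\alpha=\beta=0$ does not satisfy $\alpha+\beta=q-1$ (as $p>3$ forces $q\ge 5$), so there is no interference between the two contributions.

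Reading off the three cases then gives exactly the stated formula: $b_0=-1$ for $\alpha=\beta=0$; $b_i=(-1)^{\beta+1}\binom{q-1}{\beta}$ when $\alpha+\beta=q-1$; and $b_i=0$ otherwise. There is really no genuine obstacle here—the calculation is a one-line binomial expansion, and the only thing to be careful about is making sure the two sources of coefficients (the $-1$ and the expansion of $(t-t^q)^{q-1}$) land on disjoint exponents, which is precisely why the statement needs the separate case $\alpha=\beta=0$.
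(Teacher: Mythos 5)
Your proof is correct and follows exactly the route the paper intends: the paper gives no written proof of this proposition, stating only that it follows "by the binomial theorem applied to $(t-t^q)^{q-1}$", and your argument is precisely that expansion together with the (correct) observation that the base-$q$ representation $i=\alpha+\beta q$ is unique and that the constant $-1$ and the binomial terms land on disjoint exponents since $q>1$. Nothing further is needed.
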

For convenience, let
$$a_n:=\sum_{a\in{\mathbb F_{q}}}D_{n,3}(1,a).$$
Then by (\ref{c9}) and (\ref{c10}), we arrive at
\begin{align}
&\sum_{n=1}^{q^2-1}\Big(a_n-\frac{3n-1}{2^n}\Big)t^n\\
=&-\frac{(3t-2)(1-t^{q^2-1})}{1-t}-\frac{(2t-1)\sum_{i=1}^{q^2-q}b_it^i}{t^q-t^{q-1}-1}
\Big(t^{2(q-1)}+\sum_{k=1}^{q-1}(t-1)^{q-1-k}t^{2k}\Big(\frac{1}{4}\Big)^k\Big),
\end{align}
which implies that
\begin{align}
&(t^q-t^{q-1}-1)\sum_{n=1}^{q^2-1}\Big(a_n-\frac{3n-1}{2^n}\Big)t^n\notag\\
=&-(t^q-t^{q-1}-1)(3t-2)\sum_{i=0}^{q^2-2}t^i-(2t-1)\Big(t^{2(q-1)}+\sum_{k=1}^{q-1}
(t-1)^{q-1-k}t^{2k}\Big(\frac{1}{4}\Big)^k\Big)\sum_{i=0}^{q^2-q}b_it^i. \label{c11}
\end{align}

Let
$$\sum_{i=1}^{q^2+q-1}c_it^i$$
denote the right-hand side of (\ref{c11}) and let
$$d_n:=a_n-\frac{3n-1}{2^n}$$
for each integer $n$ with $1\le n\le q^2-1$. Then (\ref{c11}) can be reduced to
\begin{equation}\label{c12}
(t^q-t^{q-1}-1)\sum_{n=1}^{q^2-1}d_nt^n=\sum_{i=1}^{q^2+q-1}c_it^i.
\end{equation}
Then by comparing the coefficient of $t^i$ with $1\le i\le q^2+q-1$
of the both sides in (\ref{c12}), we derive the following relations:
$${\left\{
  \begin{array}{ll}
c_j=-d_j,& {\rm if} \ 1\le j\le q-1,\\
c_q=-d_1-d_q,& \\
c_{q+j}=d_j-d_{j+1}-d_{q+j},& {\rm if} \ 1\le j\le q^2-q-1,\\
c_{q^2+j}=d_{q^2-q+j}-d_{q^2-q+j+1},& {\rm if} \ 0\le j\le q-2,\\
c_{q^2+q-1}=d_{q^2-1},&
\end{array}
\right.}$$
from which we can deduce that
\begin{align}\label{c13}
{\left\{
  \begin{array}{ll}
d_j=-c_j,& {\rm if} \ 1\le j\le q-1,\\
d_q=c_1-c_q,& \\
d_{\ell q+j}=d_{(\ell -1)q+j}-d_{(\ell-1)q+j+1}-c_{\ell q+j},
& {\rm if} \ 1\le \ell\le q-2\ \ {\rm and}\ \ 1\le j\le q-1,\\
d_{\ell q}=d_{(\ell -1)q}-d_{(\ell-1)q+1}-c_{\ell q},& {\rm if} \ 2\le \ell\le q-2,\\
d_{q^2-q+j}=\sum_{i=j}^{q-1}c_{q^2+i},& {\rm if} \ 0\le j\le q-1.
\end{array}
\right.}
\end{align}

Finally, (\ref{c13}) together with the following identity
$$\sum_{a\in {\mathbb F_{q}}}D_{n,3}(1,a)=d_n+\frac{3n-1}{2^n}$$
shows that the last main result of this paper is true:
\begin{thm}
Let $c_i$ be the coefficient of $t^i$ in the right-hand side of (\ref{c11})
with $i$ being an integer such that $1\le i\le q^2+q-1$. Then we have
\begin{align*}
&\sum_{a\in {\mathbb F_{q}}}D_{j,3}(1,a)=-c_j+\frac{3j-1}{2^j}\ \ if\ \ 1\le j\le q-1,\\
&\sum_{a\in {\mathbb F_{q}}}D_{q,3}(1,a)=c_1-c_q-\frac{1}{2},\\
&\sum_{a\in {\mathbb F_{q}}}D_{\ell q+j,3}(1,a)=\sum_{a\in {\mathbb F_{q}}}D_{(\ell-1)q+j,3}(1,a)-
\sum_{a\in {\mathbb F_{q}}}D_{(\ell-1)q+j+1,3}(1,a)-c_{\ell q+j}+\frac{3}{2^{\ell +j}}\\
&\ \ \ \ if\ \ 1\le \ell\le q-2 \ {\it and} \ 1\le j\le q-1,\\
&\sum_{a\in {\mathbb F_{q}}}D_{\ell q,3}(1,a)=\sum_{a\in {\mathbb F_{q}}}D_{(\ell-1)q,3}(1,a)-
\sum_{a\in {\mathbb F_{q}}}D_{(\ell-1)q+1,3}(1,a)-c_{\ell q}+\frac{3}{2^{\ell}}\ \ if\ \ 2\le \ell\le q-2
\end{align*}
and
\begin{align*}
&\sum_{a\in {\mathbb F_{q}}}D_{q^2-q+j,3}(1,a)
=\sum_{i=j}^{q-1}c_{q^2+i}+\frac{3j-1}{2^j}\ \ if\ \ 0\le j\le q-1.
\end{align*}
\end{thm}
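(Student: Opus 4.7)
The plan is to derive the five formulas directly from the recursive relations (\ref{c13}) already established in the excerpt, by substituting back $d_n = \sum_{a \in \mathbb{F}_q} D_{n,3}(1,a) - \frac{3n-1}{2^n}$ line by line and simplifying the constants in $\mathbb{F}_q$. Since (\ref{c13}) has already been obtained through the generating-function calculation based on Proposition 2.6 and Lemma 4.1 together with the explicit separation of $a = \frac{1}{4}$ via Theorem 2.4, the remaining work is purely arithmetic bookkeeping.

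The essential observation is a reduction of $\frac{3n-1}{2^n}$ in $\mathbb{F}_q$ when $n = \ell q + j$ with $0 \le j \le q-1$. Since $p \mid q$ we have $3n - 1 \equiv 3j - 1 \pmod{p}$; and because $q = p^e$ we have $p - 1 \mid q - 1$, so $2^{q-1} \equiv 1 \pmod{p}$ and hence $2^{\ell q + j} \equiv 2^{\ell + j} \pmod{p}$. Consequently, in $\mathbb{F}_q$,
$$\frac{3n-1}{2^n} = \frac{3j-1}{2^{\ell+j}} \quad \text{whenever } n = \ell q + j.$$
In particular $\frac{3q-1}{2^q} = -\frac{1}{2}$, which is exactly the $-\frac{1}{2}$ appearing in the second formula of the theorem.

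With this reduction in hand, each case of (\ref{c13}) unfolds directly. The first line and the top-block line follow by pure substitution (for the latter one additionally uses $2^{q-1} \equiv 1$ to rewrite $\frac{3j-1}{2^{(q-1)+j}}$ as $\frac{3j-1}{2^j}$). For the two recursive middle lines, substitution of $d_n = a_n - \frac{3n-1}{2^n}$ produces three constants of the form $\frac{3m-1}{2^m}$ that, after the reduction above, collapse via the short telescoping identity
$$\frac{3j-1}{2^{\ell+j}} - \frac{2(3j-1)}{2^{\ell+j}} + \frac{3j+2}{2^{\ell+j}} = \frac{3}{2^{\ell+j}}$$
(respectively $\frac{3}{2^\ell}$ when $j = 0$) into exactly the correction term stated in the theorem. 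The only real obstacle is this collapse of constants; everything else is a mechanical rewriting of (\ref{c13}) in terms of $\sum_{a \in \mathbb{F}_q} D_{n,3}(1,a)$.
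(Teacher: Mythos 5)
Your proposal is correct and follows essentially the same route as the paper, which likewise obtains the theorem by substituting $d_n=\sum_{a\in\mathbb{F}_q}D_{n,3}(1,a)-\frac{3n-1}{2^n}$ into the relations (\ref{c13}). In fact you supply the one piece of arithmetic the paper leaves implicit, namely the reduction $\frac{3(\ell q+j)-1}{2^{\ell q+j}}=\frac{3j-1}{2^{\ell+j}}$ in $\mathbb{F}_q$ and the resulting collapse of the three constants to $\frac{3}{2^{\ell+j}}$, and your verification of that identity checks out.
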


\end{document}